\newtheorem{thm}{Theorem}
\newtheorem{lem}[thm]{Lemma}
\newtheorem{cor}[thm]{Corollary}
\newtheorem{conj}[thm]{Conjecture}
\theoremstyle{remark}
\title{List majority edge-colorings of graphs}
\author{Rafał Kalinowski, Monika Pilśniak\footnote{Corresponding author: pilsniak@agh.edu.pl}, Marcin Stawiski
} 
\affil{AGH University of Krakow, \protect\\Department of Discrete Mathematics, \protect\\ al. Mickiewicza 30, 30-059 Krakow, Poland}
\date{}
\begin{document}
\maketitle
\begin{abstract}
A majority edge-coloring of a graph without pendant edges is a coloring of its edges such that, for every vertex $v$ and every color $\alpha$, there are at most as many edges incident to $v$ colored with $\alpha$ as with all other colors. We extend some known results for finite graphs to infinite graphs, mostly in the list setting. In particular, we prove that every infinite graph without pendant edges has a majority edge-coloring from lists of size $4$. Another interesting result states that every infinite graph without vertices of finite odd degrees admits a majority edge-coloring from lists of size $2$. We formulate two conjectures. As a consequence of our results, we prove that line graphs of any cardinality admit majority vertex-colorings from lists of size 2, thus confirming the Unfriendly Partition Conjecture for line graphs.
\end{abstract}

\section{Introduction}

For a graph $G$, an edge-coloring $c:E(G)\to [k]$ is a {\it majority $k$-edge-coloring} if, for every vertex $u$ of $G$ and every color $\alpha$ in $[k]$, the cardinality of edges incident to $v$ colored with $\alpha$ is not greater than the cardinality of edges incident to $v$ colored with all other colors. 
Of course, graphs with pendant edges do not admit such a coloring. As usually, the least number of colors in a majority edge-coloring is of interest. The concept of majority edge-colorings was recently introduced in \cite{BKPPRW}, and was motivated by {\it majority vertex-colorings}, i.e. colorings of vertices of a graph $G$ such that each vertex $v$ has at most as many neighbors with the color of $v$ as with other colors. They were studied already in 1966 by Lov\'asz in \cite{Lov-66} for finite graphs, then extended to infinite graphs (e.g. in \cite{Berger,BDGS, SheMil}), and more recently to digraphs in \cite{KOSvW}. This problem for infinite graphs, including the well-known Unfriendly Partition Conjecture, is presented in more details in Section~\ref{UPC}. 
\vspace{4mm}

In~\cite{BKPPRW}, the following results were proved for finite graphs.
\begin{thm}\label{general}{\rm (\cite{BKPPRW})}
Every finite graph of minimum degree at least $2$ admits a majority $4$-edge-coloring.
\end{thm}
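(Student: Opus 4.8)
My plan is to reduce the statement, through an edge-preserving vertex-splitting operation, to graphs of maximum degree at most $3$, where the claim is essentially free. It is convenient to restate the majority condition as follows: a $4$-edge-coloring is a majority coloring if and only if at every vertex $v$ each color appears on at most $\lfloor \deg(v)/2 \rfloor$ of the edges incident with $v$ (since $x \le \deg(v) - x$ is equivalent to $x \le \lfloor \deg(v)/2 \rfloor$). For the base case, assume $\Delta(G) \le 3$; as $\delta(G) \ge 2$, every vertex has degree $2$ or $3$. By Vizing's theorem $G$ has a proper edge-coloring with at most $\Delta(G) + 1 \le 4$ colors, and at a vertex of degree $2$ or $3$ the incident edges then get pairwise distinct colors, so each color appears at most once, hence on at most $\lfloor \deg(v)/2 \rfloor$ edges. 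Thus for subcubic graphs of minimum degree $2$ every proper $4$-edge-coloring is already a majority $4$-edge-coloring. (If multiple edges are allowed, one can instead decompose such a graph into cycles and paths and color those directly.)

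For the reduction step, suppose $G$ has a vertex $v$ with $d := \deg_G(v) \ge 4$. I split $v$ into two new vertices $v_1, v_2$, distributing the $d$ edges formerly at $v$ so that $v_1$ becomes incident with $d_1$ of them and $v_2$ with $d - d_1$, where $2 \le d_1 \le d-2$; this creates no loops or multiple edges, and the new graph $G'$ still has minimum degree at least $2$. Under the obvious bijection between $E(G')$ and $E(G)$, any majority $4$-edge-coloring of $G'$ yields one of $G$: the majority condition at any vertex other than $v$ is unchanged, while at $v$ the number of edges of a fixed color is the sum of the corresponding numbers at $v_1$ and at $v_2$, which is at most $\lfloor d_1/2 \rfloor + \lfloor (d - d_1)/2 \rfloor \le \lfloor d/2 \rfloor$ by the elementary inequality $\lfloor a/2 \rfloor + \lfloor b/2 \rfloor \le \lfloor (a+b)/2 \rfloor$.

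Finally, iterating the split terminates: choosing $d_1 = 2$ each time, the nonnegative integer $\Phi(G) = \sum_{u \in V(G)} \max\{0, \deg_G(u) - 3\}$ strictly decreases with every split (by $1$ when $d = 4$ and by $2$ when $d \ge 5$, since only the degrees of $v_1, v_2$ change). Hence after finitely many splits we obtain a graph of maximum degree at most $3$ and minimum degree at least $2$, to which the base case applies, and unwinding the splits gives the desired coloring of the original graph. The one place to be careful is the arithmetic of the floor function under merging — precisely the inequality $\lfloor a/2 \rfloor + \lfloor b/2 \rfloor \le \lfloor (a+b)/2 \rfloor$, which is exactly what makes a majority coloring survive the identification $v_1, v_2 \mapsto v$ — together with the routine checks that splitting preserves simplicity and minimum degree $2$; beyond that the argument is just Vizing's theorem and bookkeeping, so I anticipate no genuine difficulty.
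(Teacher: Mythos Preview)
Your proof is correct and follows essentially the same route the paper takes (for the stronger list version in Section~\ref{inf}): split every vertex of large degree into vertices of degree $2$ or $3$, properly edge-color the resulting subcubic graph with four colors, and transfer the coloring back. The paper performs the splitting in one step and invokes the Borodin--Kostochka--Woodall bound (to cover lists), while you split iteratively, track a potential, and use Vizing together with the inequality $\lfloor a/2\rfloor+\lfloor b/2\rfloor\le\lfloor(a+b)/2\rfloor$; these are cosmetic differences on the same idea.
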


\begin{thm}\label{fin3}{\rm (\cite{BKPPRW})}
If $G$ is a finite graph of minimum degree $\delta(G)\geq 4$, then $G$ admits a majority $3$-edge-coloring.
\end{thm}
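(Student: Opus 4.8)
The plan is to build the colouring from an Euler circuit, after a preliminary reduction that absorbs the odd-degree vertices. First one may assume $G$ is connected, colouring each component separately; since $\delta(G)\ge4$, each component has at least five vertices and hence at least three edges. The engine of the argument is the following fact, which uses no degree hypothesis: \emph{if $H$ is a connected graph all of whose degrees are even, then reading a proper $3$-edge-colouring of an Euler circuit of $H$ cyclically gives a majority $3$-edge-colouring of $H$.} Indeed, fix an Euler circuit $e_1e_2\cdots e_m$, with indices modulo $m$, and a map $c\colon\mathbb Z/m\to\{1,2,3\}$ with $c(i)\ne c(i+1)$ for every $i$ (such a $c$ exists because $m\ge3$ and the $m$-cycle is $3$-colourable). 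At a vertex $v$ of degree $d$ the circuit makes exactly $d/2$ passes through $v$, each pass using two cyclically consecutive edges $e_i,e_{i+1}$, which then carry distinct colours; these $d/2$ ``pass-pairs'' partition the edge-set at $v$, so every colour appears in at most $d/2$ of them and $d_\alpha(v)\le d/2$ for every colour $\alpha$, as needed.

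Now let $G$ be connected with a nonempty (hence even-sized) set $T$ of odd-degree vertices; by $\delta(G)\ge4$ each $t\in T$ has $d_G(t)\ge5$. Let $G^{+}$ be obtained from $G$ by adding one new vertex $w$ joined to every vertex of $T$. Then every original degree becomes even, $w$ has even degree $|T|$, and $G^{+}$ is connected, hence Eulerian; apply the engine to get a colouring of $E(G^{+})$ with $d^{+}_\alpha(u)\le d_{G^{+}}(u)/2$ for all $u,\alpha$, and then delete $w$. A vertex $v\notin T$ kept all its edges and has $d_{G^{+}}(v)=d_G(v)$, so it is already within the bound. For $t\in T$ we removed exactly the edge $wt$; write $d_{G^{+}}(t)=d_G(t)+1=2\ell+2$ with $\ell\ge2$. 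The colour of $wt$ now satisfies $d_\alpha(t)\le\ell=\lfloor d_G(t)/2\rfloor$ comfortably, but each of the other two colours satisfies only $d_\alpha(t)=d^{+}_\alpha(t)\le\ell+1$, one too many. So the whole scheme works provided we can arrange, in the chosen colouring of $G^{+}$, that at every $t\in T$ \emph{no colour lies in all of the pass-pairs at $t$} --- equivalently, that all three types $\{1,2\},\{1,3\},\{2,3\}$ occur among those pairs; then $d^{+}_\alpha(t)\le\ell$ for all $\alpha$, and deleting $wt$ leaves every colour within the bound.

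The hard part is to secure this condition simultaneously at all vertices of $T$. There is plenty of room and freedom: one may choose the transition system that generates the Euler circuit of $G^{+}$ (that is, how the circuit pairs up the edge-ends at each vertex) and then choose the proper cyclic $3$-colouring, and at every $t\in T$ there are at least $d_{G^{+}}(t)/2\ge3$ pass-pairs --- just enough for ``all three types occur'' to be possible. This is exactly where $\delta(G)\ge4$ is used: a cubic vertex would leave only two pass-pairs, and in fact a cubic class-two graph (e.g.\ the Petersen graph) admits no majority $3$-edge-colouring at all. Concretely I would try to route the circuit so that, at each $t$, its pass-pairs occupy at least three distinct residue classes modulo $3$, and then colour the cyclic edge-sequence by $c(i)\equiv i+1\pmod3$ (first padding $G^{+}$ with a short cycle attached at $w$, which keeps all degrees even, so that $|E(G^{+})|$ is divisible by $3$); with this colouring the type of the pass-pair at position $i$ depends only on $i\bmod3$, so all three types appear at $t$. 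An alternative is to fix any Euler circuit and instead choose the proper cyclic $3$-colouring of its edge-sequence --- greedily or by a probabilistic argument --- so as to dodge the finitely many ``monochromatic-heavy'' local patterns, one per vertex of $T$. Proving that the routing can be made globally consistent (keeping a single closed Euler tour), or that the avoidance is feasible, is the main obstacle I anticipate.
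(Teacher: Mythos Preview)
This theorem is merely quoted from \cite{BKPPRW}; the present paper gives no proof of it, so there is nothing here to compare your argument against. The paper only uses Theorem~\ref{fin3} as a black box (via Lemma~\ref{f3}) inside the proof of Theorem~\ref{inf3}.

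On its own merits, your Euler-circuit engine is correct and pleasant: in any connected graph with all degrees even, a proper $3$-colouring of the cyclic edge-sequence of an Euler tour gives each colour at most $d(v)/2$ times at every vertex, with no lower bound on $\delta$ needed. The odd-degree reduction, however, is incomplete by your own admission. After adding $w$ and deleting the edges $wt$, you need that at every $t\in T$ all three pass-pair types $\{1,2\},\{1,3\},\{2,3\}$ occur, and you have not shown this can be arranged simultaneously. This is a genuine gap, not a routine detail: if $G$ is $5$-regular then $T=V(G)$, so the constraint lives at every vertex of $G^{+}$ except $w$, and one cannot fix vertices one at a time. Your two proposed repairs --- routing the tour so that the pass-positions at each $t$ cover all residues modulo $3$, or choosing the proper cyclic $3$-colouring to dodge the bad local patterns --- are plausible heuristics, but neither comes with an argument. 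In particular, padding to length divisible by $3$ and colouring $c(i)\equiv i$ makes the pass-pair type at position $i$ depend only on $i\bmod 3$, but the positions of the passes at different vertices $t$ are tightly correlated along a single Euler tour, so getting all residues at every $t$ is a global constraint on the transition system whose feasibility you have not established. As written, the proposal is a promising sketch whose key lemma is missing.
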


\begin{thm}\label{mi2}{\rm (\cite{BKPPRW})}
A finite graph $G$ admits a majority $2$-edge-coloring if and only if all vertices of $G$ have even degrees and the size of $G$ is even.
\end{thm}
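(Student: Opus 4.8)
\medskip
\noindent\textbf{Proof proposal.}
The plan is to read the two necessary conditions straight off the definition, and for the converse to build the coloring from an Eulerian circuit.

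For necessity, let $c\colon E(G)\to\{1,2\}$ be a majority $2$-edge-coloring with color classes $E_1,E_2$, and write $d_i(v)$ for the number of edges of $E_i$ incident with $v$. The defining inequality at a vertex $v$ with $\alpha=1$ reads $d_1(v)\le d_2(v)$, and with $\alpha=2$ it reads $d_2(v)\le d_1(v)$; hence $d_1(v)=d_2(v)=\deg(v)/2$ for every $v$, so every degree of $G$ is even. Summing over all vertices, $2|E_1|=\sum_v d_1(v)=\sum_v d_2(v)=2|E_2|$, so $|E(G)|=2|E_1|$ is even. (Running the same double count inside a single connected component shows that component has even size; for a connected graph this is precisely the condition ``$|E(G)|$ even'', and it is this component-wise reading one keeps in mind when $G$ is disconnected.)

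For sufficiency I would first reduce to the connected case: a majority $2$-edge-coloring of $G$ is just a simultaneous such coloring of its components, and by the observation above each component has even size. So assume $G$ is connected with $m:=|E(G)|$ even and all degrees even; then $G$ has an Eulerian circuit $W=(v_0,e_1,v_1,\dots,e_m,v_m=v_0)$. Color $e_i$ with $1$ if $i$ is odd and with $2$ if $i$ is even. To see this is a majority $2$-edge-coloring it suffices to check $d_1(v)=d_2(v)$ at each vertex, and this can be read off $W$: each vertex $v$ is passed through exactly $\deg(v)/2$ times, an interior pass at an occurrence $v=v_i$ with $0<i<m$ using the consecutive edges $e_i,e_{i+1}$, which get opposite colors, while the start/end of $W$ contributes the pair $e_1,e_m$ at $v_0$, colored $1$ and $2$ respectively because $m$ is even. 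Thus at every vertex each pass yields one edge of each color, so $d_1(v)=d_2(v)$.

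The one delicate point — and the reason the hypotheses are sharp — is exactly this start/end pass at $v_0$: were $m$ odd, $e_1$ and $e_m$ would both receive color $1$ and unbalance $v_0$, and the double count shows no coloring can repair a component of odd size (two disjoint triangles being the standard illustration that mere evenness of $|E(G)|$ does not by itself suffice). Hence the only genuine work is the clean reduction to connected components; once $G$ is connected the Eulerian-circuit coloring finishes it at once.
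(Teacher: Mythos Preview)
Your Euler-circuit argument for a connected even graph of even size, and the equality $d_1(v)=d_2(v)$ forced in the necessity direction, are exactly the approach the paper takes: Theorem~\ref{mi2} is quoted from \cite{BKPPRW} without proof here, but the paper's own Lemma~\ref{Lem:xd} (the list version of the ``if'' direction) begins ``$G$ has an Euler tour $W$'' and colors consecutive edges with distinct colors, which for identical lists is precisely your alternating coloring.

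There is, however, a genuine gap in your reduction to components. You write ``by the observation above each component has even size,'' but that observation was derived in the necessity direction, where a majority $2$-edge-coloring is \emph{given}; invoking it in the sufficiency direction is circular. Indeed you immediately undercut yourself: two disjoint triangles have all degrees even and total size $6$, yet---as you say---admit no majority $2$-edge-coloring. So the stated hypotheses do \emph{not} force each component to have even size, and your ``clean reduction'' does not go through. The theorem must be read with an implicit connectedness assumption (this is what the paper does: the proof of Lemma~\ref{Lem:xd} silently assumes an Euler tour exists), or else with the hypothesis strengthened to ``every component has even size.'' Once $G$ is connected your argument is complete and matches the paper's; the only thing to fix is to state that assumption up front rather than pretend it follows from the global parity of $|E(G)|$.
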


In this paper, we extend these results to infinite graphs whose orders are arbitrary cardinal numbers, considering also the list version of edge-colorings.  Section~\ref{auxr} contains some auxiliary results for Section~\ref{3}, where we prove the list version of Theorem~\ref{mi2} for finite and infinite graphs. Next in Section~\ref{UPC}, we confirm the well-known Unfriendly Partition Conjecture for line graphs, what we derive from the results of the previous section. Actually, we prove there a stronger result that line graphs of any cardinality admit majority vertex-colorings from lists of size 2. In Section~\ref{inf}, we show that Theorem~\ref{general} and Theorem~\ref{fin3} hold for infinite graphs, the first one also in the list setting.  

\hspace{5mm}

We use standard terminology and notation of graph theory (cp. \cite{Diestel}). A {\it double ray} in an infinite graph is a two-sided infinite path, i.e. an infinite connected 2-regular graph. A {\it ray} is a one-sided infinite path, and a unique vertex of degree one in a ray is its {\it endvertex}. 

Following Schmidt~\cite{Schmidt}, we define the {\it rank} of a graph without rays as follows. We assign rank 0 to all the finite graphs. Next, we assign the smallest ordinal $\rho$ as the rank of any infinite graph $G$ which does not already have rank less than $\rho$ and which contains a finite set $S$ of vertices such that each component of $G-S$ has some rank less than $\rho$. We shall make use of the following facts (cf.~\cite{Schmidt, Halin, BDGS}). For every rayless graph $G$ with rank $\rho(G)>0$, there exists a minimal set $S$ that works, called a {\it core} of $G$. 
If $S$ is the core of $G$, then the number of components of $G-S$ is infinite, and each vertex of $S$ has infinite degree in $G$. Moreover, if $C_1,\ldots,C_n$ is a finite set of components of $G-S$, then the rank of the subgraph of $G$ induced by $S\cup\bigcup_{i=1}^n V(C_i)$ has rank less than the rank of $G$.

For convenience, we introduce here some more notation and terminology. If $W$ is an oriented walk of a graph and $e$ is an edge in $W$, then we denote by $e^+$ the subsequent edge, and by $e^-$ the previous edge in $W$. 

Assume that an edge-coloring of a graph is given. 
We say that a vertex $v$ is \emph{majority colored} if there is no color $\alpha$ such that $v$ is incident with more edges of color $\alpha$ than with the remaining incident edges.  Note that if the degree of $v$ is infinite, then $v$ is majority colored if and only if for every color $\alpha$, the cardinality of the set of edges incident with $v$ which are not colored with $\alpha$ equals the degree of $v$. We say that a vertex $v$ is \emph{almost majority colored} if $v$ is not majority colored and there exists a color $\alpha$, called an {\it overwhelming color}, such that $v$ is incident with at most two edges more of the color $\alpha$ than the remaining incident edges. Notice that if $v$ is an almost majority colored vertex, then the degree of $v$ is finite and the number of its incident edges with the overwhelming color exceeds the number of remaining incident edges by 1 or 2, depending on the parity of the degree of $v$. If all vertices of a graph $G$ are majority colored, then  the graph $G$ is majority colored. 
%%%%%%%%%%%%%%%%%%%%%%%%%%%%%%%%%%%%%%%%%%%%%%%%%%%%%%%%%
%%%%%%%%%%%%%%%%%%%%%%%%%%%%%%%%%%%%%%%%%%%%%%%%%%%%%%%%%

%%%%%%%%%%%%%%%%%%%%%%%%%%%%%%%%%%%%%%%%%%%%%%%%%%%%%%%%%%%%%%%

\section{Auxiliary results}\label{auxr}

\begin{lem}\label{Lem:xd}
    Let $G$ be a finite graph of even size with all vertices of even degree, and $\mathcal{L}$ be a set of lists for edges of $G$, each of size $2$.  Then there exists a majority edge-coloring  of $G$ from the set $\mathcal{L}$ of lists.
\end{lem}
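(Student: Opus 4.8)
The plan is to reduce the list version to the non-list version of Theorem~\ref{mi2} by an Eulerian-orientation argument applied componentwise. First I would recall that, since $G$ is finite with all degrees even, each connected component of $G$ is Eulerian, and moreover (using that the total size is even) one can pair up the odd components so that the subgraph we actually need to handle decomposes nicely; but the cleaner route is to work with a single closed Eulerian walk in each component and fix parities at the end. Concretely, for each component $C$ I would fix an Eulerian circuit $W_C$, which orients the edges of $C$ so that at every vertex the in-degree equals the out-degree; thus at each vertex $v$ the incident edges come in ``consecutive pairs'' $(e^-,e)$ along $W_C$, where each edge is the in-edge of exactly one such pair and the out-edge of exactly one such pair.

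The key step is a greedy/alternating colouring along each Eulerian circuit. Traverse $W_C = e_1 e_2 \dots e_m$ (with $e_{m+1}=e_1$) and colour the edges one at a time: having chosen colours for $e_1,\dots,e_{i-1}$, choose $c(e_i)\in L(e_i)$ so that, at the common vertex $v$ of $e_i$ and $e_{i+1}$... — the cleanest formalisation is the standard one: orient via the Euler tour, and for each vertex $v$ split its incident edges into the pairs $\{\text{in},\text{out}\}$ given by successive visits of the tour. Because $|L(e)|=2$ for every edge, for each such pair $P=\{f,g\}$ at $v$ there is a choice making $c(f)\neq c(g)$ unless $L(f)$ and $L(g)$ are forced; in fact one shows that one can always colour the two edges of each pair with distinct colours: build an auxiliary graph whose vertices are the edges of $G$ and whose edges join the two members of each Euler-tour pair — this auxiliary graph has maximum degree $2$ (each edge of $G$ lies in at most two pairs, once as an in-edge and once as an out-edge), hence is a disjoint union of paths and cycles, and a proper $2$-list-colouring of a union of paths and even cycles is immediate, while odd cycles need a separate small argument. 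Colouring so that the two edges of every pair differ guarantees that at each vertex $v$ of degree $d(v)=2k$ no colour appears more than $k$ times, i.e.\ $v$ is majority coloured.

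The main obstacle is exactly the odd-cycle case of that auxiliary graph, which is where the hypothesis that the size of $G$ is even must be used — on a single odd cycle of the auxiliary graph one cannot always $2$-list-colour properly, so one cannot insist that both edges of every Euler-pair differ. Here I would either (i) argue that the number of Euler-pairs at each vertex is $k=d(v)/2$ and that allowing a single ``defect'' pair per odd auxiliary-cycle still leaves each vertex majority coloured provided the defect is distributed so that no vertex accumulates two monochromatic pairs — and the global parity/size condition is what lets us route the unavoidable defects to cancel in pairs; or (ii) avoid the auxiliary graph entirely and instead colour greedily along each Euler tour, at each step picking from the $2$-list a colour differing from the previously placed neighbour-edge at the shared vertex, which is always possible for all but possibly the last edge of the tour, and then use evenness of $|E(G)|$ together with a parity bookkeeping over all components to repair the finitely many closing edges simultaneously. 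Either way, the even-size hypothesis is consumed precisely in closing up the last edge of the odd Eulerian circuits, mirroring the role it plays in Theorem~\ref{mi2}.
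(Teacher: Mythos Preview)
Your framework --- take an Euler tour and colour so that consecutive edges along the tour differ --- is exactly the paper's, and for a connected $G$ it correctly reduces the task to properly $2$-list-colouring a single auxiliary cycle of length $|E(G)|$. But neither proposed fix for the odd-cycle case closes the argument. Fix (i) fails outright: a defect on an odd auxiliary cycle is forced only when every list on that component equals the same $\{\alpha,\beta\}$, and then at the vertex $v$ carrying the monochromatic pair every other Euler-pair at $v$ contributes exactly one $\alpha$ and one $\beta$, so $\alpha$ appears $d(v)/2+1$ times at $v$ --- there is nothing to distribute. (Two disjoint triangles with all lists $\{\alpha,\beta\}$ also show that the lemma is false without connectivity; the paper tacitly assumes it and only ever applies the lemma to connected graphs.)

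Fix (ii) is one observation away from the paper's proof, and that observation removes any need for repair. A cycle is properly $2$-list-colourable whenever not all lists coincide: if consecutive tour-edges $e,e^+$ have $L(e)\ne L(e^+)$, colour $e^+$ from $L(e^+)\setminus L(e)$ and then walk greedily once around the tour choosing each colour different from the previous; on returning to $e$ pick $c(e)\in L(e)\setminus\{c(e^-)\}$, and $c(e)\ne c(e^+)$ is automatic since $c(e^+)\notin L(e)$. Thus the ``closing edge'' never needs fixing. The only remaining case is that all lists are identical, where the naive alternating colouring along the tour works because the tour has even length --- this is the sole place the even-size hypothesis is spent.
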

\begin{proof}
It follows from the assumptions that $G$ has an Euler tour $W$. We fix an orientation of $W$. First, assume that all elements of $\mathcal{L}$ are the same. We color the edges in $W$ alternately. Clearly, this yields a majority coloring.

If not every list in $\mathcal{L}$ is the same, then we choose two consecutive edges $e$ and $e^+$ in $W$ which have different lists. We color $e^+$ with a color which does not belong to $L(e)$. Next, we color each consecutive edge in $W$ starting from the edge subsequent to $e^+$ in such a way that, for every edge $f$, the next edge $f^+$ has a different color than $f$ has. The choice of $e$  guaranties that each two consecutive edges in $W$ have distinct colors. Therefore, the obtain coloring is a majority coloring.
\end{proof}

\begin{lem}\label{Lem:XD}
    Let $G$ be a finite graph of odd size with all vertices of even degree, and $\mathcal{L}$ be a set of lists for edges of $G$, each of size $2$. Let $b$ be an arbitrary vertex of $G$. Then there exists an edge-coloring  of $G$ from the set $\mathcal{L}$ of lists such that each vertex different from $b$ is majority colored, and either $b$ is majority colored, or $b$ is almost majority colored.
\end{lem}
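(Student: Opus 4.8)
The plan is to mimic the proof of Lemma~\ref{Lem:xd}, but now the graph $G$ has odd size with all degrees even, so an Euler tour still exists (the graph is connected in the relevant component, or we can argue componentwise), yet a closed Euler tour of odd length cannot be properly $2$-colored cyclically: starting an alternating coloring around the tour, the two edges meeting at the closing vertex will receive the same color. The idea is to choose the Euler tour $W$ so that it \emph{starts and ends at the prescribed vertex $b$}, and then do the alternating coloring along $W$; the only place where two consecutive edges of $W$ may end up with the same color is at $b$, so every vertex other than $b$ is majority colored, and $b$ picks up at most a small surplus in one color, i.e.\ becomes majority colored or almost majority colored.

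First I would reduce to the connected case: if $G$ is disconnected, only finitely many components are nontrivial, and since the total size is odd, exactly an odd number of components have odd size (the rest have even size). Put $b$ into one odd-size component and handle it as in the main argument below; apply Lemma~\ref{Lem:xd} to each even-size component; and for the remaining odd-size components (which do not contain $b$), I still need each vertex to be majority colored — here I would note that such a component, having all even degrees and odd size, can be handled by choosing an Euler tour and observing that the parity clash at the closing vertex can be absorbed by first spending the ``different list'' trick of Lemma~\ref{Lem:xd} if the lists are not all equal, or, if all lists are equal, we are forced to accept one almost-majority-colored vertex per odd component — which contradicts the statement. So I think the intended reading is that $G$ is connected (or one argues that the odd components other than the one containing $b$ can always be merged or that the statement is applied only to connected graphs in later sections); I would state the lemma's proof for $G$ connected and remark that the general case follows by distributing $b$ appropriately.

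So, assuming $G$ connected: since all degrees are even, $G$ has a closed Euler tour, and we may take it to start (and end) at $b$; write $W = e_1 e_2 \cdots e_m$ with $m$ odd, where $e_1$ and $e_m$ are both incident to $b$. If all lists are equal, color the edges of $W$ alternately $1,2,1,2,\ldots$; since $m$ is odd, $e_1$ and $e_m$ both get color $1$, so the two ``extra'' color-$1$ edges at $b$ are $e_1,e_m$, and $b$ is either majority colored (if $\deg_G(b)$ is large enough relative to the surplus) or almost majority colored with overwhelming color $1$; all other vertices $v$ appear in $W$ as endpoints of pairs $(e_i, e_{i+1})$ with $i \ge 1$, and every such consecutive pair is properly colored, so $v$ is majority colored. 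If not all lists are equal, run the recoloring trick of Lemma~\ref{Lem:xd}: pick two consecutive edges $e_i,e_{i+1}$ of $W$ with different lists, color $e_{i+1}$ with a color outside $L(e_i)$, and then propagate around the cyclic order of $W$ so that each $f^+$ differs from $f$; the surplus at $b$, if any, again involves at most the pair $e_1,e_m$, so $b$ is majority or almost majority colored and every other vertex is majority colored.

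The main obstacle I expect is the bookkeeping at $b$: I need to verify that the at-most-two extra edges of one color at $b$ is genuinely at most two even when $b$ is visited many times by $W$ and several visits happen to place two same-colored edges of $W$ at $b$. The point to nail down is that along $W$, the only consecutive pair $(e_i,e_{i+1})$ receiving the same color is the ``wrap-around'' pair $(e_m,e_1)$ (in the all-equal-list case) or the chosen pair is repaired and again only the wrap-around pair can clash (in the general case); every interior visit of $W$ to $b$ uses a consecutive pair $e_i,e_{i+1}$ with $i<m$, which is properly colored, so it contributes one edge of each color to $b$. Hence the color imbalance at $b$ is confined to the two wrap-around edges $e_1,e_m$, giving a surplus of at most $2$ (exactly $2$ if $\deg_G(b)\equiv 0 \pmod 4$ forces it, and at most the degree otherwise), which is precisely the definition of almost majority colored; and if $b$'s degree is large enough that this surplus of $2$ does not exceed the number of remaining incident edges, then $b$ is in fact majority colored. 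I would close by remarking that the edge of $W$ incident to $b$ could also be chosen to align with the list-repair edge, which sometimes upgrades $b$ to majority colored, but the stated weaker conclusion is all we need.
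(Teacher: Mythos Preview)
Your approach is essentially the paper's: take an Euler tour starting and ending at $b$ and color so that consecutive edges of the tour receive different colors, with the only possible same-color consecutive pair being the first and last edge at $b$. The paper does this in one stroke without your case split—simply pick any color for $e_1$ from $L(e_1)$ and then greedily give $e_{j+1}$ a color from $L(e_{j+1})\setminus\{c(e_j)\}$; your division into ``all lists equal'' versus ``not'' is unnecessary here (and in fact your second case, carried out as you describe it, produces no clash at all, so your claim that the surplus still sits at $(e_1,e_m)$ there is off, though harmlessly so). Your worry about disconnected $G$ is legitimate: the lemma as stated tacitly assumes connectedness—otherwise it fails when $b$ lies outside an odd-size component—and the paper only ever applies it to connected pieces.
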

\begin{proof}
Again, $G$ has an Euler tour $W$.  We color the edges in $W$ starting from an edge incident to $b$ in such a way that if $e$ has a color $\alpha$, then $e^+$ has a different color, except possibly the first and the last edge of $W$, which are incident to $b$. Now, each vertex, possibly except $b$, is majority colored, and $b$ may have exactly two incident edges of one color more than the edges of all other colors.
\end{proof}
In the proof of the main result, we make use of the following theorem, which may be of interest on its own.

\begin{thm}\label{Lem:1}
Let $G$ be a graph and let  $\mathcal{H}$ be any family of graphs. Then $G$ contains a subgraph $F$  decomposable into elements of $\mathcal{H}$, which is maximal among all such subgraphs, that is, the remaining subgraph $G-E(F)$ contains no subgraph isomorphic to a graph from $\mathcal{H}$.
\end{thm}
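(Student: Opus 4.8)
The plan is to invoke Zorn's Lemma, applied not to subgraphs directly but to pairs consisting of a subgraph together with a witnessing decomposition. Let $\mathcal{P}$ be the set of all pairs $(F,\mathcal{D})$ where $\mathcal{D}$ is a set of pairwise edge-disjoint subgraphs of $G$, each isomorphic to some member of $\mathcal{H}$, and $F=\bigcup_{D\in\mathcal{D}}D$ is the subgraph of $G$ with edge set $\bigcup_{D\in\mathcal{D}}E(D)$ and vertex set $\bigcup_{D\in\mathcal{D}}V(D)$. Partially order $\mathcal{P}$ by declaring $(F_1,\mathcal{D}_1)\preceq(F_2,\mathcal{D}_2)$ if and only if $\mathcal{D}_1\subseteq\mathcal{D}_2$. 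The pair $(\emptyset,\emptyset)$ belongs to $\mathcal{P}$, so $\mathcal{P}\neq\emptyset$; and $\mathcal{P}$ is genuinely a set, since every admissible $\mathcal{D}$ is a subset of the fixed set of all subgraphs of $G$.

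Next I would verify that every chain in $\mathcal{P}$ has an upper bound. Let $\{(F_i,\mathcal{D}_i)\}_{i\in I}$ be a chain and put $\mathcal{D}=\bigcup_{i\in I}\mathcal{D}_i$. Any two members $D,D'\in\mathcal{D}$ lie in a common $\mathcal{D}_j$ — the larger of the two, which exists because the chain is linearly ordered by inclusion — and are therefore edge-disjoint; moreover each member of $\mathcal{D}$ is isomorphic to a graph in $\mathcal{H}$. Hence $(F,\mathcal{D})\in\mathcal{P}$, where $F$ has edge set $\bigcup_{D\in\mathcal{D}}E(D)$, and clearly $(F_i,\mathcal{D}_i)\preceq(F,\mathcal{D})$ for all $i$. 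So Zorn's Lemma yields a maximal element $(F,\mathcal{D})$ of $\mathcal{P}$.

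It remains to check that this $F$ is the desired subgraph. By construction $F$ is decomposable into elements of $\mathcal{H}$, as witnessed by $\mathcal{D}$. Suppose for contradiction that $G-E(F)$ contained a subgraph $H'$ isomorphic to a graph from $\mathcal{H}$. Since $E(H')\cap E(F)=\emptyset$, the graph $H'$ is edge-disjoint from every member of $\mathcal{D}$, so $\mathcal{D}\cup\{H'\}$ is again a family of pairwise edge-disjoint copies of members of $\mathcal{H}$; letting $F'$ be the subgraph with edge set $E(F)\cup E(H')$, the pair $(F',\mathcal{D}\cup\{H'\})$ lies in $\mathcal{P}$ and strictly exceeds $(F,\mathcal{D})$, contradicting maximality. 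Therefore $G-E(F)$ contains no subgraph isomorphic to a graph from $\mathcal{H}$, which completes the proof.

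The only real subtlety — and the reason for carrying the decomposition $\mathcal{D}$ inside the poset rather than ordering bare subgraphs by inclusion — is the chain condition: a nested union of $\mathcal{H}$-decomposable subgraphs need not itself be $\mathcal{H}$-decomposable unless the decompositions are mutually compatible, and keeping track of $\mathcal{D}$ makes that compatibility automatic. Everything else is routine, and note that no restriction on the cardinality of $G$ is needed, since Zorn's Lemma applies verbatim.
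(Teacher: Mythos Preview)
Your proof is correct and follows essentially the same approach as the paper: apply Zorn's Lemma to the poset of decompositions (rather than bare subgraphs) ordered by inclusion, then extract the maximal one. Your write-up is in fact more careful than the paper's---you explicitly verify pairwise edge-disjointness in the union of a chain and spell out the contradiction establishing maximality of $F$---but the underlying argument is the same.
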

\begin{proof}
 The idea of the proof is to show that there is a maximal decomposition which gives us a subgraph with the desired property rather than straightforwardly proving that such a maximal subgraph exists. First, we define a family $\mathcal{F}$ of subsets of $E(G)$ which are decompositions of subgraphs of $G$ into unions of some pairwise edge-disjoint elements of $\mathcal{H}$. For $A,B\in \mathcal{F}$, we write $A\leq B$ if $A\subseteq B$. We show that every non-empty chain in $(\mathcal{F}, \leq)$ contains an upper bound. Let $\mathcal{C}$ be a non-empty chain in $\mathcal{F}$, and let $M=\bigcup \mathcal{C}$. For every element $C\in \mathcal{C}$, we have $C\leq M$, and $M$ is a decomposition of a certain subgraph of $G$. Hence $M$ majorizes $C$. Therefore, by Zorn's Lemma there exists a maximal element in $\mathcal{F}$. The obtained maximal decomposition gives a maximal subgraph from the statement of the theorem. 
\end{proof}

%%%%%%%%%%%%%%%%%%%%%%%%%%%%%%%%%%%%%%%%%%%%%%%%%

\section{Lists of size 2}\label{3}

Let us formulate the main result of this section.

\begin{thm}\label{2-list}
A graph $G$ of arbitrary order admits a majority edge-coloring from any set $\mathcal{L}$ of lists, each of size $2$, if and only if no vertex of $G$ has odd degree and $G$ is not a~finite graph of odd size.
\end{thm}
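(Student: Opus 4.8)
The plan is to split into the two directions. The "only if" direction is the easy one: if some vertex $v$ has odd degree $d$, then for every color $\alpha$ incident to $v$, either $\alpha$ or its alternative from $L(e)$ must occur on strictly more than half of the $d$ edges at $v$, so $v$ cannot be majority colored regardless of the lists; and if $G$ is finite of odd size, a parity count over all vertices (summing $\sum_v (\text{number of edges at }v\text{ in the majority color})$ against the total, $2|E(G)|$) shows that at least one vertex fails, recovering the "only if" part of Theorem~\ref{mi2}. So the bulk of the work is the "if" direction: assuming every vertex of $G$ has even (finite or infinite) degree, and $G$ is not finite of odd size, produce a majority edge-coloring from the given lists $\mathcal L$.

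I would first reduce to connected graphs (handle each component separately, being careful that only the whole graph, not a component, is forbidden from being finite of odd size — but a single finite odd-size component is fine to color non-majority at one vertex only if we can fix it; actually each finite even-degree component of odd size cannot be majority colored alone, so the reduction must be: combine the finitely-or-infinitely many components so that the "bad" odd-size ones either do not exist or can be paired/merged, and if $G$ itself is finite of odd size we are in the excluded case). For a connected graph with all degrees even: if $G$ is finite, Lemma~\ref{Lem:xd} (even size) does it directly, and the odd-size case is excluded. If $G$ is infinite and locally finite, $G$ has a set of edge-disjoint double rays and finite Euler-type structures; more robustly, I would invoke Theorem~\ref{Lem:1} with $\mathcal H$ the family of finite even-degree graphs of even size together with double rays, decompose a maximal such subgraph $F$, color each piece via Lemma~\ref{Lem:xd} (for the finite even pieces) and alternately along each double ray, and argue that the leftover $G - E(F)$ has been forced to be empty — because any vertex still incident to leftover edges, having even degree overall and contributing an even number of colored edges, leaves an even leftover degree, and an even-degree nonempty subgraph contains a finite even-size subgraph or a double ray, contradicting maximality.

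The genuinely delicate case is an infinite graph with vertices of infinite degree, where "maximality of $F$" is not by itself enough, since removing finitely many edges at an infinite-degree vertex changes nothing. Here the key idea I would use is a transfinite / well-ordered greedy construction driven by the rank machinery (Schmidt's cores) introduced in the preliminaries: well-order the edges, or better, peel off cores. At a core vertex $v$ of infinite degree we must guarantee that for every color $\alpha$, the set of incident edges avoiding $\alpha$ still has full cardinality $\deg(v)$; since $|L(e)|=2$, it suffices to ensure that at $v$ each of the (at most countably many, or in general at most $\deg(v)$ many) colors appearing in lists at $v$ is "avoided" by $\deg(v)$-many edges — this is automatic as long as no single color is used on a set of edges at $v$ of size $\deg(v)$ that is co-small, which we can arrange by balancing along an appropriate decomposition of the edges at $v$ into pairs/paths. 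Concretely I would decompose $G$ (transfinitely, across successive cores $S_\xi$, each of whose vertices has infinite degree and each of whose complementary components has strictly smaller rank, so induction on rank applies to the components) so that at every infinite-degree vertex the incident edges are partitioned into $\deg(v)$-many finite pieces, color each piece with two distinct colors from its lists, and check the majority condition holds at $v$ because every color is then missing from cofinally (in fact, $\deg(v)$-many) edges.

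The main obstacle, as flagged, is exactly this last point: making the local alternating/ pairing choices at different vertices mutually consistent across the whole (possibly uncountable) graph, i.e. organizing the edge-disjoint pieces (finite even-size even-degree graphs and double rays) so that they cover all edges and every vertex sees each relevant color on a set of incident edges of the right cardinality. I expect to handle this by combining Theorem~\ref{Lem:1} (to get a maximal decomposition) with a rank/core induction (to handle infinite-degree vertices, where "maximal" must be upgraded to "spanning" by a separate argument showing the leftover even-degree graph is empty), and with Lemmas~\ref{Lem:xd} and~\ref{Lem:XD} to color the finite pieces — using Lemma~\ref{Lem:XD}'s "almost majority colored exceptional vertex $b$" as the device for passing odd-size finite leftovers up to the next level, where they get absorbed by an infinite-degree vertex and thereby become harmless.
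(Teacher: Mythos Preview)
Your proposal assembles the right ingredients (Theorem~\ref{Lem:1} for maximal decompositions, Lemmas~\ref{Lem:xd}--\ref{Lem:XD} for finite pieces, rank/core induction for rayless parts), but the central step of your locally-finite argument is wrong, and the repair you sketch does not cover the gap.

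You claim that after removing a maximal $F$ decomposable into double rays and finite even-degree even-size graphs, the leftover $G-E(F)$ is empty. This is false. Let $G$ be a double ray with a triangle on $\{a,b,c\}$ attached at one ray vertex $a$; all degrees are even. No double ray in $G$ can use a triangle edge (it would have to revisit $a$), and no finite even-degree subgraph can use one, two, or all three triangle edges without either leaving an odd degree at $b$ or $c$ or pushing odd degrees out along the ray indefinitely. Hence the only maximal $F$ is the double ray itself, and $G-E(F)$ is the triangle --- a finite odd-size all-even-degree component that cannot be majority list-colored on its own. Maximality does not force emptiness, and the leftover can be precisely the bad case.

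The paper does not try to empty the leftover. It decomposes only into double rays, colors each component $H$ of $G-E(F)$ separately (finite; infinite rayless via rank induction on the core; or infinite with a ray via a further decomposition into one-sided rays), and then resolves the finite odd-size all-even-degree components with a \emph{shifting} trick that you are missing: choose the exceptional vertex $b$ of Lemma~\ref{Lem:XD} to lie on some double ray $P\in\mathcal P$; if $b$ ends up almost majority colored with overwhelming color equal to that of the edge $v_iv_{i+1}$ of $P$, recolor $v_iv_{i+1}$ with the other color in its list and propagate along $P$ towards infinity, recoloring each subsequent edge to differ from its predecessor. This fixes $b$ and creates no new bad vertex, because the change escapes to infinity. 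Your proposed alternative --- absorbing $b$'s defect at an infinite-degree vertex at the next level of the rank induction --- is unavailable in precisely the locally finite case (the example above has no infinite-degree vertex), so it cannot replace the shifting argument.
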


The "only if" part is obvious. For the "if" part, we prove the following a bit stronger result.

\begin{thm}\label{main}
    Let $G$ be a graph, and $\mathcal{L}$ be the set of lists for the edges of $G$, each of size $2$. Then there exists an edge-coloring of $G$ from the set $\mathcal{L}$ of lists  in which every vertex of $G$ which is not of  odd degree is majority colored and each vertex of odd degree in $G$ is either majority colored or almost majority colored, or $G$ is a finite graph of odd size with all vertices of even degree.
\end{thm}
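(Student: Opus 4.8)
The plan is to reduce everything to Euler-tour colourings (Lemmas~\ref{Lem:xd} and~\ref{Lem:XD}) by peeling off suitable subgraphs with the decomposition principle of Theorem~\ref{Lem:1}, and then by transfinite induction on Schmidt rank. The recurring bookkeeping device is that at each vertex $v$ we organise its incident edges into a family of \emph{balanced pairs} (two edges of distinct colours, so that in any such collection no colour is used on more than half the edges) together with a small, controllable remainder; then $v$ automatically ends up majority coloured when $\deg v$ is even or infinite, and off by exactly one when $\deg v$ is finite and odd.

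I would first treat the finite case and reduce to connected graphs. Components are handled independently, with the two-colour exception accounting for the lone unavoidable configuration (a finite connected graph of odd size all of whose degrees are even). For a finite connected $G$: if every degree is even and the size is even, Lemma~\ref{Lem:xd} finishes; if every degree is even and the size is odd, this is the excepted graph; otherwise $G$ has an odd-degree vertex, and I adjoin one vertex $w$ joined to every odd-degree vertex, obtaining a finite graph $G'$ with all degrees even (with arbitrary size-$2$ lists on the new edges). Applying Lemma~\ref{Lem:xd} when $|E(G')|$ is even, or Lemma~\ref{Lem:XD} with $b=w$ when it is odd, and then deleting the edges at $w$: each former odd-degree vertex of $G$ loses exactly one edge and becomes almost majority coloured, while every other vertex stays majority coloured.

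Now let $G$ be infinite and connected. Apply Theorem~\ref{Lem:1} with $\mathcal H$ the family of all rays to obtain a subgraph $F$, edge-decomposable into rays, such that $H:=G-E(F)$ contains no ray, i.e. is rayless. Colour the rays of the decomposition greedily so that consecutive edges get distinct colours (possible since every list has two elements), but leave the first edge of each ray uncoloured for the moment; then at every vertex $v$ the already-coloured $F$-edges form balanced pairs (one per ray having $v$ in its interior), and the only remaining $F$-edges at $v$ are the first edges of the rays ending at $v$. Next colour $H$ using the rayless case below, so that in each component every vertex is majority coloured if its $H$-degree is even or infinite and off by one if odd, with the single relaxation that each finite component of $H$ of odd size and all degrees even may have one vertex off by two — and such a vertex may be taken on a ray of $F$, since a component of $H$ meeting no ray of $F$ would be a finite component of $G$, impossible. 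Finally colour the reserved first edges: at a vertex $v$ that is an endvertex of only finitely many rays, if $v$ has an overwhelming colour $\gamma$ under the colouring of $H$, colour all these first edges avoiding $\gamma$, and otherwise make them as balanced as possible via an Eulerian-orientation argument on the multigraph of their lists — each such edge is free to take either list element because its colour need only differ from the (already-coloured) second edge of its ray; if $v$ is the endvertex of infinitely many rays then $\deg_F v=\infty$ and $v$ is already majority coloured. A short arithmetic check using $\deg_G v=\deg_F v+\deg_H v$ and the ``no colour above half'' property of balanced pairs then verifies the required property at every $v$, reducing the theorem to its restriction to rayless graphs.

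For rayless $H$ I would induct on the Schmidt rank $\rho(H)$: rank $0$ is the finite case above (the off-by-two relaxation being Lemma~\ref{Lem:XD} with $b$ the prescribed vertex, after the apex construction). If $\rho(H)>0$, pick a core $S$; it is finite, each $s\in S$ has infinite degree, and each component $C_j$ of $H-S$ has rank $<\rho(H)$. Colour each $C_j$ by the induction hypothesis — in a form strengthened to carry along the already-coloured pendant edges joining $C_j$ to $S$ and to control the behaviour of every vertex of $C_j$ relative to its full degree in $H$; no $C_j$ is an isolated exceptional piece, being attached to $S$. It then remains to colour the edges meeting $S$: those inside the finite graph $H[S]$ arbitrarily, and, for each $s\in S$, its infinitely many edges into the $C_j$'s so that $s$ sees every colour cofinitely often, making $s$ majority coloured; the strengthened hypothesis is exactly what makes these choices compatible with the component colourings. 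I expect the main obstacle to be precisely this coordination: formulating the strengthened inductive statement that transports already-coloured boundary edges down the rank while keeping the per-vertex colour-count vectors (not merely a single imbalance number, since these are list colourings) under control, and verifying that the only genuinely uncolourable configuration is an isolated finite component of odd size with all degrees even — equivalently, that adjoining even one further edge to such a component removes the obstruction.
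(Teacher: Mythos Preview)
Your architecture is close to the paper's: Euler-tour lemmas for the finite base case, Theorem~\ref{Lem:1} to strip off a maximal ray-decomposable subgraph, and transfinite induction on Schmidt rank for what remains. The paper differs mainly in peeling off \emph{double rays} first and only then rays inside the leftover components, and---crucially---in using a \emph{shifting} procedure (recolouring the forward tail of a ray or double ray from a bad vertex onwards) rather than your ``reserve the first edge'' device.

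That device is where your argument breaks. The off-by-two vertex $b$ coming from a finite component of $H$ of odd size with all degrees even need only lie \emph{on} some ray of $F$; nothing forces it to be an \emph{endvertex}. If $b$ is purely internal to the rays through it, there is no reserved first edge at $b$ at all, and your final step never touches $b$; the balanced pairs from the rays through $b$ do not reduce an excess of two. Concretely, take $G$ to be a double ray $P$ with two disjoint triangles attached at two distinct vertices $v_1,v_2$ of $P$. Then $F=P$, and every decomposition of $P$ into rays is two half-lines sharing a single endvertex $w$; at most one of $v_1,v_2$ can equal $w$, so the other is internal with no first edge available, yet it is the unique candidate for the off-by-two vertex of its triangle. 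The paper handles exactly this situation by walking along the ray (or double ray), and whenever it meets a vertex of $Y$ overwhelmed by a colour $\alpha$, recolouring the forward edge away from $\alpha$ and then greedily recolouring the whole forward tail so that consecutive edges still differ; since the walk proceeds monotonically, no repaired vertex is disturbed later.

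A second, smaller problem: your sentence ``each such edge is free to take either list element because its colour need only differ from the (already-coloured) second edge of its ray'' is self-defeating. If the second edge's colour lies in the first edge's list, only one list element remains, and then the Eulerian-orientation balancing of first edges at an endvertex $v$ (which needs both orientations of each list-edge available) is no longer justified. This is easily repaired---colour the first edge freely and \emph{then} colour $e_2,e_3,\dots$ greedily from $e_2\neq e_1$---but that is already a miniature version of the shifting you need anyway for the internal off-by-two vertices above.
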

\begin{proof}
Let us call an edge coloring from the lists $\mathcal{L}$ {\it good} if it satisfies the claim, i.e. only vertices of odd degrees need not be majority colored, but they have to be almost majority colored.

The case when $G$ is finite follows from Lemma \ref{Lem:xd} and Lemma \ref{Lem:XD}. Assume  that $G$ is infinite.

If $G$ contains a double ray, then let $F$ be a maximal subgraph of $G$ decomposable into double rays, which exists by Lemma \ref{Lem:1}. Let us fix a decomposition $\mathcal{P}$ of $F$ into double rays. We color each double ray $P\in\mathcal{P}$  in such a way that if an edge $e$ of $P$ has a color $\alpha$, then each of two its adjacent edges, $e^-,e^+$ in $P$, has a color different than $\alpha$.  Hence every vertex of $F$ is majority colored in $F$ from the set $\mathcal{L}$ of lists. 

Now, we consider every component $H$ of $G-E(F)$. Clearly, each vertex $v\in V(F)$ belongs to exactly one such component (which can be $K_1$). Observe also that no vertex has odd degree in $F$. Consequently, if every component $H$ admits a good coloring from the set $\mathcal{L}$ of lists, then the whole graph $G$ also does. The only components that need a special treatment are finite components of odd size with all vertices of even degree, because they contain vertices which may be almost majority colored (cp.  Lemma~\ref{Lem:XD}).
 
Let $X$ be the set of vertices with odd degrees in $H$. To color the edges of $H$, we distinguish three cases.

\textbf{Case 1.} Component $H$ is a finite graph.

\noindent If the set $X$ is empty and $H$ has even size, then, by Lemma~\ref{Lem:xd}, there exists a majority edge-coloring of $H$ from the set $\mathcal{L}$ of lists. Clearly, each vertex of $H$ is now majority colored also in $G$.

Now let the set $X$ be non-empty. Notice that  $X$ has an even number of vertices because $H$ is finite. Hence, we can partition $X$ into pairs. For each such pair $\{ x,y \}$, we join $x$ and $y$ by an extra path of length at most three to obtain a graph $H'$ of even size and with all vertices of even degree. Then we color $H'$ with a majority edge-coloring from the list set $\mathcal{L}$, which exists by Lemma \ref{Lem:xd}. Deletion of the additional paths yields a good edge-coloring of the subgraph $G[V(F)\cup V(H)]$ since the only vertices of $H$ with deleted incident edges belong to $X$, and each such vertex is either majority colored or almost majority colored in this subgraph. 

Assume now that the set $X$ is empty but the size of $H$ is odd. Let $b$ be any vertex of $H$ which also belongs to a double ray $P\in \mathcal{F}$.  It follows from Lemma~\ref{Lem:XD} that all vertices of $H$, except possibly $b$, are majority colored. We now show how to finally make $b$ majority colored.
\vspace{3mm}

Suppose that all finite components of $G-E(F)$ are already colored in the manner just described. Let $Y$ denote the set of vertices of the subgraph $G'$ induced by the vertices of $F$ and all finite components of $G-E(F)$, which are not majority colored and have finite even degree. Thus each vertex $b\in Y$ is a common vertex of a double ray $P\in \mathcal{F}$ and a finite component $H$ of odd size with all vertices of even degrees in $H$. Moreover, there is a color $\alpha$ such that the number of edges incident to $b$ colored with $\alpha$ exceeds the number of incident edges with other colors exactly by 2. To make all vertices of $Y$ majority colored, for each $P\in \mathcal{F}$ we perform the following procedure of {\it shifting the edge-coloring} of $P$. We enumerate the vertices of $P$ by integers, i.e. $V(P)=\{\ldots, v_{-1},v_0,v_1,\ldots\}$. For $i=0,1,2,\ldots,$ we verify whether $v_i\in Y$. If this is the case, then $v_i$ is overwhelmed by a color of an edge $e$ of $P$ incident to $v_i$. We can assume that $e=v_iv_{i+1}$. Indeed, if $e=v_{i-1}v_i$, then we recolor the edges of $H$ such that $v_i$  is either majority colored or overwhelmed by the color of the edge $v_iv_{i+1}$. This can be done because each list of $\mathcal{L}$ contains two colors. Next we put another admissible color from $L(e)$ on $e=v_iv_{i+1}$, and we consecutively recolor the edges $v_jv_{j+1}$ of $P$, for $j=i+1,i+2,\ldots,$ such that the color of $v_jv_{j+1}$ is distinct from the color of the previous edge $v_{j-1}v_j$. Next,  we examine the vertices $v_{-i}$ for $i=1,2,\ldots$. If $v_{-i}\in Y$, then we may analogously assume that $v_i$ is overwhelmed by the color of the edge $v_{-i-1}v_{-i}$. Then we consecutively recolor the edges $v_{-j-1}v_{-j}$, for $j=i,i+1,\ldots,$ such that the color of $v_{-j-1}v_{-j}$ is distinct from that of the edge $v_{-j}v_{-j+1}$.

Thus, we obtain a good coloring of the subgraph $G'$, the sum of $F$ and all finite components of $G-E(F)$.

\vspace{5mm}

\textbf{Case 2.} Component $H$ is an infinite rayless graph.

\noindent We proceed by transfinite induction on the rank of $H$.  If the rank of $H$ is $0$, then $H$ is a finite graph, which we consider in Case 1. Assume then that the rank of $H$ is greater than 0. Let $S$ be the core of $H$. Then each vertex of $S$ has infinite degree in $H$. Let $\mathcal{C}$ be the set of all components of $H-S$. For $C\in\mathcal{C}$, denote $C'=H[S\cup V(C)]$, the subgraph induced by the set $S$ and the vertices of the component $C$. 

Denote by $S_1$ the set of all vertices $s\in S$ which are connected with $d_H(s)$ components from $\mathcal{C}$ by exactly one edge. For each $s\in S_1$, select a set $\mathcal{C}(s)$ of cardinality $d_H(s)$ of such components. As the set $S_1$ is finite, this can be done in such a way that the sets $\mathcal{C}(s), s\in S_1,$ are pairwise disjoint. Next, partition each set $\mathcal{C}(s)$ into pairs $(C_1,C_2)$. For each such pair take a good edge-coloring of the subgraph $H[S\cup V(C_1)\cup V(C_2)]$, in which $s$ has degree 2. Such a good coloring exists by the induction hypothesis. It is easy to see that each vertex of $S_1$ is already majority colored no matter how the remaining edges incident to it will be colored.   

Now, consider the components from $\mathcal{C}\setminus \bigcup_{s\in S_1}\mathcal{C}(s)$. For each such component $C$ take a good coloring of $C'$ which exists by the induction hypothesis. Thus all edges in $H$ are colored.

Observe that each vertex $v$ of any component $C\in \mathcal{C}$ is majority colored, unless $v$ is of odd degree in $H$ and $v$ is almost majority colored. This is because $N_H(v)\subset V(C')$. Also, every vertex $s\in S\setminus S_1$ is majority colored, even if its degree is odd for infinitely many components $C\in \mathcal{C}$. 

\vspace{5mm}

\textbf{Case 3.} Component $H$ is an infinite graph with a ray. 

Let $F'$ be a maximal subgraph of $H$ decomposable into rays whose existence follows from Lemma~\ref{Lem:1}. We select a decomposition $\mathcal{R}$ of $F'$ into rays. Observe that the decomposition $\mathcal{R}$ may contain any number of rays, but any two rays share infinitely many vertices since $F$ does not contain a double ray. 

This time, we first color components of $H-E(F')$. Each such component $K$ is a finite graph or an infinite rayless graph. We color its edges  in the same way as in Case 1 or Case 2, respectively. Thus every vertex $v\in V(K)$ is majority colored in $K$ unless either $v$ has odd degree in $K$ or $K$ is a finite graph of odd size with all vertices of even degree, where $v$ is an almost majority colored common vertex of $K$ and a ray from $\mathcal{R}$. 

Now we color the edges of $F'$. Clearly, a vertex $v$ has an odd degree in $F'$ only if $v$ is an endvertex of a ray from $\mathcal{R}$. For each such vertex $v$, let $r_v$ be the cardinality of the set $\mathcal{R}(v)$ of rays in $\mathcal{R}$ starting at $v$. Suppose first that $v$ has even or infinite degree in its component $K$ of $H-E(F')$. If $r_v$ is an infinite cardinal or an even natural number, then we partition the set $\mathcal{R}(v)$ into two subsets $\mathcal{R}^1(v)$ and $\mathcal{R}^2(v)$ of the same cardinality $r_v$. Thus, there is a one-to-one correspondence between elements of these two sets. We choose colors for initial edges of the rays from $\mathcal{R}(v)$ in such a way that if the initial edge of a ray $R_1\in \mathcal{R}^1(v)$ has color $\alpha$, then the initial edge in the corresponding ray $R_2\in \mathcal{R}^2(v)$ gets a different color. Thus $v$ is majority colored in the whole graph $G$.  When $r_v$ is an odd natural number, then we similarly partition the set $\mathcal{R}(v)$ into two subsets of sizes differing by one, and choose colors for initial edges of rays of $\mathcal{R}(v)$ such that either $v$ is majority colored or almost majority colored in the component $H$, and hence in the whole graph $G$. 

Let $v$ have an odd degree in its component $K$ of $H-E(F')$. We analogously partition the set $\mathcal{R}(v)$ into subsets $\mathcal{R}^1(v)$ and $\mathcal{R}^2(v)$, where the size of $\mathcal{R}^1(v)$ can be greater by one than the size of $\mathcal{R}^2(v)$ if $r_v$ is odd. If $v$ is almost majority colored in $K$ with the overwhelming color $\alpha$, then we assume that initial edges of rays in $\mathcal{R}^1(v)$ cannot get color $\alpha$. Then we proceed as above.

Thus all endvertices of rays in $\mathcal{R}$ are well colored in $F'$. Next, we color every ray $R$ in such a way that each edge $e$ gets a color distinct from that of the previous edge $e^-$ in $R$. 

However, not all vertices of $H$ are well colored. This concerns the set $Y$ of common vertices of rays and finite components of $H-E(F')$ of odd size with all vertices of even degree. To correct this flaw, we perform the procedure of shifting colorings of rays, in fact the same as for double rays in the first stage of the proof. Namely, for every ray $R\in \mathcal{R}$, we enumerate its consecutive vertices with non-negative integers, i.e. $V(R)=\{v_i: i=0,1,\ldots\}$. We find the smallest $i$ such that $v_i\in Y$.  Thus $v_i$ is overwhelmed in $H$ by a certain color $\alpha$. Without loss of generality, we may again assume that $\alpha$ is a color of the edge $v_iv_{i+1}$ in $R$. We change the color of $v_iv_{i+1}$, thus making $v_i$ majority colored. Then we recolor consecutive edges $v_jv_{j+1}, j=i+1,i+2,\ldots$, of the ray $R$ with a color distinct from the color of $v_{j-1}v_j$. Next we find another smallest $i$ with $v_i\in Y$, and continue the procedure of shifting colorings of $R$ until all vertices of $R$ are examined. 

This yields a good edge-coloring of $H$, and thus of the whole graph $G$.

\end{proof}

We conclude with an obvious consequence of Theorem~\ref{2-list} concerning usual majority colorings without lists.

\begin{cor}\label{maj2}
A graph $G$ of arbitrary order admits a majority $2$-edge coloring if and only if no vertex of $G$ has odd degree and $G$ is not a finite graph of odd size.
\end{cor}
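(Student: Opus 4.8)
The plan is to derive Corollary~\ref{maj2} directly from Theorem~\ref{2-list} by specializing the list setting to the case where all lists are equal. Given a graph $G$ satisfying the hypotheses (no vertex of odd degree, and $G$ not a finite graph of odd size), I would simply take the constant list assignment $\mathcal{L}$ in which every edge receives the list $\{1,2\}$. Each list has size $2$, so Theorem~\ref{2-list} applies and yields a majority edge-coloring of $G$ from $\mathcal{L}$; but a coloring from this particular $\mathcal{L}$ is exactly a majority $2$-edge-coloring in the ordinary sense, with color set $[2]$. This gives the ``if'' direction.

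For the ``only if'' direction, I would note it is immediate and in fact already implicit in the ``only if'' part of Theorem~\ref{2-list}, which the authors observed is obvious: if some vertex $v$ has odd degree $d$, then in any $2$-edge-coloring the $d$ edges at $v$ split into two color classes whose sizes differ by at least $1$ (since $d$ is odd they cannot be equal), so the larger class strictly exceeds the other and $v$ is not majority colored; and if $G$ is finite of odd size with all degrees even, then summing the color-class sizes over a connected component forces the parity obstruction, i.e.\ the total number of edges in each color would have to make every vertex balanced, which is impossible when $|E(G)|$ is odd — this is precisely the content of Theorem~\ref{mi2} in the finite case.

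There is essentially no obstacle here: the corollary is a one-line specialization of the stronger list statement, since the ordinary chromatic (here, majority-coloring) parameter is always bounded above by its list analogue, and conversely any constant list assignment recovers the ordinary notion. The only thing to be careful about is phrasing, namely observing that $[2]=\{1,2\}$ and that ``majority $2$-edge-coloring'' and ``majority edge-coloring from the constant list set $\{\{1,2\}\}$'' denote the same objects, so that both the existence and the nonexistence transfer between the two formulations. Hence the proof reads: the ``if'' part follows from Theorem~\ref{2-list} applied to the constant lists $L(e)=\{1,2\}$, and the ``only if'' part is the same parity argument as in Theorem~\ref{2-list} (equivalently, Theorem~\ref{mi2}).
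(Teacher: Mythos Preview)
Your proposal is correct and matches the paper's approach exactly: the paper presents this corollary as an ``obvious consequence of Theorem~\ref{2-list}'' without further argument, and your specialization to the constant lists $L(e)=\{1,2\}$ is precisely the intended one-line derivation. Your added justification of the ``only if'' direction is a harmless elaboration of what the paper already dismissed as obvious before Theorem~\ref{main}.
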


%%%%%%%%%%%%%%%%%%%%%%%%%%%%%%%%%%%%%%%%%%%%%%%%%%%%%%%%%%%%%%%%%%%%%
%%%%%%%%%%%%%%%%%%%%%%%%%%%%%%%%%%%%%%%%%%%%%%%%%%%%%%%%%%%%%%%%%%%%%
\section{Unfriendly Partition Conjecture holds for line graphs}\label{UPC}

Recall that a majority $k$-vertex-coloring of a graph $G$ is a mapping $c:V(G)\to [k]$ such that, for every vertex $v\in V(G)$, the cardinality of neighbors of $v$ with the color $c(v)$ is not greater than the cardinality of neighbors in other colors. Equivalently, a graph $G$ admits a majority $k$-vertex-coloring if there is a partition $V_1,\ldots,V_k$ of $V(G)$ such that, for each vertex $v\in V_i$, one has $|N(v)\cap V_i|\leq |N(v)\cap(V\setminus V_i)|$, for $i=1,\ldots,k$. 

Cowan and Emerson, in unpublished work, conjectured that every infinite graph has a majority 2-vertex coloring. It was disproved by Shelah and Milner by showing the following. 
\begin{thm} {\rm (\cite{SheMil})}
There exist uncountable graphs without majority $2$-vertex-coloring. However, every infinite graph has a majority $3$-vertex-coloring.
\end{thm}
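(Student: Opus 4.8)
I sketch the strategy behind both halves of this theorem of Shelah and Milner; the negative half — an uncountable graph with no majority $2$-vertex-coloring — is where essentially all the difficulty sits.

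For the positive half, that every infinite graph $G$ admits a majority $3$-vertex-coloring, the plan starts from the finite case, which is the classical max-cut observation: a partition $V(G)=V_1\cup V_2\cup V_3$ maximizing the number of edges whose endpoints lie in two different classes is automatically a majority coloring, since a vertex seeing more neighbors in its own class than in the other two classes together could be moved to a least-used class, strictly increasing that number. For infinite $G$ no such extremal partition need exist and a single move need not help, so I would run a transfinite construction: well-order $V(G)$ and build a partial $3$-coloring, committing a color to a vertex only once doing so is irrevocably safe. The delicate point — and it is exactly what makes the analogous $2$-color statement false — is that coloring a fresh vertex can spoil an already-colored neighbor. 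The way around it is to maintain an invariant stronger than plain happiness: keep at each committed vertex a reservoir of slack (pigeonhole gives, for three colors, a class holding at most a third of a vertex's neighbors, hence a safe choice), and arrange the enumeration so that every vertex is revisited cofinally often and any damage is later repaired; alternatively, first peel off an independent set destined for the third color and reduce the rest to a graph one already knows admits an unfriendly $2$-partition. Closing this bookkeeping is the obstacle here, but it is a soft, compactness-flavored one.

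For the negative half one must construct an uncountable graph $G$ in which \emph{every} map $c\colon V(G)\to\{1,2\}$ leaves some vertex with a strict majority of neighbors in its own color. I would build $G$ in transfinite stages indexed by ordinals, attaching at each stage local gadgets engineered so that whenever their "base" vertices are balanced under $c$, some vertex introduced at a later stage is forced to be unbalanced; since the stage indices are well-ordered, no coloring can propagate the defect upward forever, so $c$ must fail somewhere, and taking the index set of size at least $\aleph_1$ (the original construction uses a much larger cardinal, and shrinking it is a separate matter) makes this work. The hard part is the \emph{consistency} of the construction — designing the gadgets and the gluing of consecutive stages so that no unforeseen interaction between far-apart gadgets secretly admits a good coloring — which is handled by a recursion in which, at each stage, one records via a rank or elementary-submodel argument on the part of $G$ already built exactly which partial $2$-colorings are still "alive", and then introduces at the next stage a gadget killing each of them. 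This verification, that the graph defeats all $2$-partitions simultaneously, is the step I expect to be genuinely hard; the $3$-color result, by contrast, is in essence a transfinite-greedy argument.
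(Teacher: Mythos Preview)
The paper does not prove this theorem at all: it is quoted verbatim from Shelah and Milner~\cite{SheMil} as background for the Unfriendly Partition Conjecture, with no argument supplied. There is therefore nothing in the paper to compare your sketch against.

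As to the sketch itself, it is an honest outline of where the difficulties lie rather than a proof, and you say as much. A couple of points are worth flagging. For the $3$-coloring half, your ``alternative'' route---peel off an independent set for the third color and reduce to a graph known to admit an unfriendly $2$-partition---is circular as stated: the whole content of the negative half is that some graphs do \emph{not} admit such a partition, so you would need to say precisely which class the reduction lands in and why it is covered by an existing $2$-color result. The transfinite-greedy-with-repair idea is closer in spirit to what Shelah and Milner actually do, but ``closing this bookkeeping'' is not soft: the naive revisit-and-fix scheme can oscillate forever, and the published argument needs a genuinely new idea to terminate. For the negative half your description (stagewise gadgets, kill all surviving partial colorings at each step) is in the right genre, but the actual construction is considerably more delicate than a rank or elementary-submodel bookkeeping, and the resulting graph has cardinality around $\beth_\omega$; getting it down to $\aleph_1$, which you mention in passing, is not known.
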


The question whether countably infinite graphs have a majority 2-vertex-coloring remains open, and is known as the Unfriendly Partition Conjecture, which reads as follows.

\begin{conj} Every countably infinite graph admits a majority $2$-vertex-coloring.
\end{conj}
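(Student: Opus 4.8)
The plan is to begin from the finite-graph case and push it as far as possible toward the countable setting, being explicit about where the argument must break. For a finite graph the standard device is a local improvement argument: fix an arbitrary $2$-coloring $c\colon V(G)\to\{1,2\}$ and, as long as some vertex $v$ is \emph{unhappy}, i.e.\ has strictly more neighbors of color $c(v)$ than of the opposite color, recolor $v$ with the opposite color. Each such recoloring strictly decreases the nonnegative, integer-valued number of monochromatic edges, so the process terminates, and any terminal coloring is exactly a majority (unfriendly) $2$-coloring. The whole difficulty of the conjecture is that this monovariant need not be finite, and admits no well-founded descent, once $G$ is infinite.

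My first reduction would be to isolate the role of vertices of infinite degree. In a countable graph every infinite degree equals $\aleph_0$, and a short computation shows that an infinite-degree vertex $v$ is majority colored if and only if it has infinitely many neighbors of the color opposite to $c(v)$; equivalently, $v$ is unhappy precisely when all but finitely many of its neighbors share its color. Thus infinite-degree vertices are very flexible: any assignment keeping both color classes infinite inside $N(v)$ makes $v$ happy permanently. I would therefore treat the finite-degree vertices and the infinite-degree vertices by different mechanisms and then reconcile them.

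For the finite-degree part I would attempt a staged construction along a fixed enumeration $v_1,v_2,\dots$ of $V(G)$, building an increasing chain of finite partial colorings and passing to the union, in the spirit of a back-and-forth or König-type argument; the happiness of a finite-degree vertex is a finitary condition, so it can be committed to at a finite stage and preserved thereafter. The infinite-degree vertices I would handle by carrying, as an invariant through the stages, the guarantee that each such vertex already examined has had infinitely many neighbors reserved for the opposite color. This is essentially how the known partial results (for graphs with only finitely many vertices of infinite degree) are obtained, and it is also the flavor of the present paper's confirmation of the conjecture for line graphs, where the edge-coloring translation converts the degree hypotheses into exactly such reservations.

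The step I expect to be the genuine obstacle---and the reason the conjecture remains open in full generality---is controlling \emph{infinitely many} vertices of infinite degree simultaneously. Happiness of an infinite-degree vertex is a tail condition that no finite approximation can certify, so naive compactness (König's lemma) fails: a system of locally consistent finite colorings need not extend to a global majority coloring. At the same time the switching monovariant is useless, since flipping one vertex can move infinitely many monochromatic edges in and out at once, destroying any descent. A realistic line of attack is an $\omega$-length or transfinite iteration of switching that flips only unhappy vertices while preserving the reservation invariant for all infinite-degree vertices, governed by a potential function supported on the finite-degree vertices; alternatively, one might peel off the infinite-degree part via an elementary-submodel or ultrafilter argument so as to reduce to the Aharoni--Milner--Prikry case of finitely many infinite-degree vertices. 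Making either reconciliation work for an \emph{arbitrary} countable graph is precisely the hard, and currently unresolved, core of the problem.
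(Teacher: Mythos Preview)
The statement you were asked to prove is labeled \emph{Conjecture} in the paper, not a theorem: it is the Unfriendly Partition Conjecture, which the paper quotes as an open problem and does \emph{not} prove. There is therefore no ``paper's own proof'' to compare against; the paper only confirms the conjecture for the special class of line graphs, via Theorem~\ref{main}.

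Your write-up is consistent with this: it is not a proof but an essay on why the finite switching argument does not extend, together with a sketch of how one might try to separate finite-degree from infinite-degree vertices. You explicitly and correctly flag the obstruction (infinitely many vertices of infinite degree, where happiness is a tail condition that finite approximations cannot certify) and call the core ``currently unresolved.'' So there is no gap to diagnose in the usual sense---you have not claimed a proof---but you should be aware that the assignment, read literally, has no solution: the statement is open, and the paper treats it as such.
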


The Unfriendly Partition Conjecture has been confirmed for graphs \\ $\bullet$ with finitely many vertices of infinite degree~(\cite{AhMiPr}),
\\ $\bullet$ with all vertices of infinite degree~(\cite{DeV}),
\\ $\bullet$ without rays~(\cite{BDGS}),
\\ $\bullet$ without a subdivision of infinite clique~(\cite{Berger}). 
\vspace{3mm}

Let us add that in 2023 Haslegrave proved the following.
\begin{thm} {\rm (\cite{Has})}
Every countable graph admits a majority vertex-coloring from any lists of size $3$.     
\end{thm}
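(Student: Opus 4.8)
The plan is to produce the coloring as the pointwise limit of an $\omega$-sequence of colorings obtained by repeatedly repairing vertices that fail the majority condition, exploiting the third color in each list as the slack that the (still open) two-color Unfriendly Partition Conjecture lacks. First I would fix an enumeration $v_1,v_2,\ldots$ of $V(G)$ and an initial coloring $c_0$ assigning each vertex an arbitrary color from its list. The target is a sequence $c_0,c_1,\ldots$ such that for every vertex the values $c_n(v)$ are eventually constant, so that $c:=\lim_n c_n$ is well defined, and such that every vertex is majority colored in $c$. Countability is what allows this $\omega$-length process to be organized by priority.

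The elementary repair step rests on a parity observation. If a vertex $v$ of finite degree $d$ is not majority colored, then a strict majority $m_a>d/2$ of its neighbors share its color $a$; recoloring $v$ to any other color $b\in L(v)$ leaves at most $d-m_a<d/2$ neighbors agreeing with it, so $v$ becomes majority colored. Crucially, with lists of size $3$ there are \emph{two} admissible targets $b,b'$, which is precisely the advantage over lists of size $2$, where the single forced recoloring can propagate indefinitely. For a vertex $v$ of countably infinite degree the failure mode is different: such a $v$ is unhappy only when all but finitely many of its neighbors carry its own color, so it suffices to keep infinitely many neighbors dissenting. I would fold this into the same scheme by declaring ``$v$ has only finitely many dissenting neighbors'' an unhappy condition, repaired by recruiting one further neighbor to switch away from $c(v)$ — a finite action, always available since each of the infinitely many neighbors has a list of three colors and can avoid the single prescribed color $c(v)$.

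The process itself I would drive by priority, controlled by the summable potential $\Phi(c_n)=\sum_{v_k\ \mathrm{unhappy}} 2^{-k}$. At stage $n$ I locate the least index $k$ with $v_k$ unhappy and repair it as above. A repair of a finite-degree $v_k$ that does not break any already-happy neighbor of smaller index removes the term $2^{-k}$ and can only add terms $2^{-j}$ with $j>k$ over a finite set of neighbors; since a finite subset of $\{k+1,k+2,\ldots\}$ has $2$-adic sum strictly below $\sum_{j>k}2^{-j}=2^{-k}$, such a repair strictly decreases $\Phi$. If $\Phi$ strictly decreases at every stage and each $c_n(v)$ changes only finitely often, the limit coloring exists and, by construction, is majority at both finite- and infinite-degree vertices.

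The hard part will be guaranteeing that at each stage a \emph{potential-decreasing} repair actually exists, i.e. that one of the two admissible target colors for $v_k$ breaks no smaller-indexed happy neighbor. Both alternatives $b,b'$ could in principle each destabilize some lower-priority-index neighbor sitting exactly at its majority boundary, in which case the naive rule forces $\Phi$ to increase; resolving this is the genuine difficulty of the theorem, and it is exactly the point at which the extra freedom of three colors must be converted into a global argument. I would attack it by replacing the single recoloring with a bounded ``switching chain'' that pushes the induced damage along boundary vertices and terminates within a finite region — using the third color at each step to deflect the chain away from higher-priority vertices — and then argue that such a terminating chain always exists and leaves $\Phi$ smaller. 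Establishing termination of these chains (equivalently, that no vertex is recolored infinitely often) is the crux that genuinely uses both the countability of $G$ and lists of size $3$; once it is in place, verifying the majority condition in the limit is routine.
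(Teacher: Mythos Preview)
The paper does not contain a proof of this theorem: it is quoted from Haslegrave's paper \cite{Has} as background in Section~\ref{UPC}, with no argument given. So there is nothing in the present paper to compare your proposal against.

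On its own merits, your proposal is a plan rather than a proof, and you are candid about this: the entire difficulty is packed into the sentence ``Establishing termination of these chains \ldots\ is the crux,'' and that sentence is not followed by an argument. The potential $\Phi$ only decreases when the repair of $v_k$ destabilizes no happier vertex of smaller index, and you correctly note that both alternative colors $b,b'$ might each break such a neighbor. Your proposed fix is to push the damage along a ``switching chain'' that terminates in a finite region, but nothing in the write-up explains why such a chain must terminate, why the third color lets you steer it away from lower-index vertices, or why the net effect on $\Phi$ is negative. Until that is supplied, the scheme does not establish that any vertex's color stabilizes, let alone that the limit is a majority coloring.

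There is also a concrete error in the infinite-degree repair. If $v$ has infinite degree and all but finitely many neighbors share its color $a$, recruiting \emph{one} further neighbor to dissent still leaves only finitely many dissenters, so $v$ remains unhappy and the term $2^{-k}$ does not drop out of $\Phi$; iterating this forever means $v_k$ is visited infinitely often and the process never moves past index $k$. The correct local move is to recolor $v$ itself to some $b\in L(v)\setminus\{a\}$: then only the finitely many neighbors already colored $b$ agree with $v$, so $v$ becomes happy, and only those finitely many neighbors can be destabilized. This fits your potential framework, whereas the move you describe does not.
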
 

Our Theorem \ref{main} easily implies the following result.
\begin{thm}
Every line graph of arbitrary infinite order admits a majority vertex-coloring from any lists of size $2$.
\end{thm}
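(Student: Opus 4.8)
The plan is to reduce a majority vertex-coloring of the line graph $L(G)$ to a majority edge-coloring of the underlying graph $G$ itself, and then invoke Theorem~\ref{main}. Given an infinite graph $G$, recall that vertices of $L(G)$ are edges of $G$, and two such vertices are adjacent in $L(G)$ precisely when the corresponding edges of $G$ share an endvertex. Thus a list assignment of size $2$ for $V(L(G))$ is exactly a list assignment $\mathcal{L}$ of size $2$ for $E(G)$, and a coloring $c$ of $V(L(G))$ from these lists is exactly an edge-coloring of $G$ from $\mathcal{L}$. The key observation is that the neighborhood of a vertex $e=uv$ in $L(G)$ splits (with multiplicity at the overlap $e$ being excluded) into the edges incident to $u$ other than $e$ and the edges incident to $v$ other than $e$; hence if the coloring $c$ is a majority \emph{edge}-coloring of $G$ — equivalently, every vertex of $G$ is majority colored — then for each edge $e=uv$ and each color $\alpha$, the number of $\alpha$-colored edges at $u$ other than $e$ is at most the number of non-$\alpha$ edges at $u$ other than $e$ (with the contribution of $e$ itself cancelling on both sides if $c(e)=\alpha$, and being favorable otherwise), and similarly at $v$; adding the two inequalities shows $e$ is majority colored in $L(G)$. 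So a genuine majority edge-coloring of $G$ transfers verbatim to a majority vertex-coloring of $L(G)$.

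The first step is therefore to check which $G$ we need to handle: since $L(G)$ is assumed to have infinite order, $G$ has infinitely many edges, so $G$ is not a finite graph of odd size. If, moreover, $G$ has no vertex of odd degree, Theorem~\ref{main} (or Corollary~\ref{maj2} in the listless case) directly gives a majority edge-coloring of $G$ from $\mathcal{L}$, and we are done by the transfer argument above. The remaining obstacle — which I expect to be the only real point requiring care — is the presence of vertices of \emph{finite odd degree} in $G$, since at such a vertex Theorem~\ref{main} only guarantees an \emph{almost} majority colored vertex, i.e. one color exceeding the rest by exactly $1$ (odd degree forces the excess $+1$, not $+2$). I would argue that this single-color excess of $1$ at a finite-odd-degree vertex $u$ is harmless for $L(G)$: take any edge $e=uv$ incident to $u$. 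At $u$, among the edges \emph{other than} $e$ the worst imbalance for any color $\alpha$ is: if $c(e)\neq\alpha$ then the count of $\alpha$ drops by one less than the count of non-$\alpha$, so the excess is still at most $+1$ but now over an \emph{even} number of remaining edges, giving excess $\le 0$; if $c(e)=\alpha$ the excess among the remaining edges is $+1-1=0$ or stays $+1$ depending on which color was overwhelming — in all cases the excess at $u$ restricted to $V(L(G))\setminus\{e\}$ is at most $+1$, and combined with the $\le 0$ excess at $v$ (where $v$ is majority colored or also contributes nonpositively after removing $e$) the total over $N_{L(G)}(e)$ is $\le 0$. Hence every vertex $e$ of $L(G)$ is majority colored, with no exceptions.

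I would present this as: (i) identify list assignments for $V(L(G))$ with list assignments for $E(G)$; (ii) apply Theorem~\ref{main} to $G$ — noting $G$ is infinite hence not a finite odd-size graph — to obtain an edge-coloring in which every vertex of even or infinite degree is majority colored and every finite-odd-degree vertex is almost majority colored with excess $+1$; (iii) for each edge $e=uv$, bound the monochromatic degree of $e$ in $L(G)$ by summing the (possibly corrected) imbalances at $u$ and at $v$ after deleting the contribution of $e$, concluding the excess is nonpositive in every color. The one subtlety to write out carefully is step (iii) at a vertex of degree $1$ in $G$ — but a degree-$1$ vertex in $G$ produces a pendant edge, and an edge $e=uv$ with $\deg_G(u)=1$ has, on the $u$-side, no neighbors in $L(G)$ at all, so that side contributes $0$ trivially; no genuine difficulty arises. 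The argument needs no new combinatorial construction beyond Theorem~\ref{main}, so "easily implies" in the statement is justified.
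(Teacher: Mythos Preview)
Your approach is exactly the paper's: identify list assignments on $V(L(G))$ with list assignments on $E(G)$, apply Theorem~\ref{main} to the underlying graph (which is not a finite graph of odd size since the line graph is infinite), and then verify that an almost majority colored odd-degree endvertex does no harm in $L(G)$. The paper dispatches this last verification with ``it is easy to see''.

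Your written verification in step~(iii), however, does not close. You bound the excess at $u$ among edges other than $e$ by $+1$ and at $v$ by $\le 0$, and then assert the total is $\le 0$; but $+1+0\not\le 0$, and nothing prevents \emph{both} endvertices from having odd degree, so by your accounting both could contribute $+1$. The source of the confusion is that you analyze an arbitrary color $\alpha$, including the irrelevant case $c(e)\neq\alpha$. For a majority \emph{vertex}-coloring one only needs $\alpha=c(e)$. Since $e$ itself carries color $\alpha$, deleting $e$ from the edges incident to $u$ lowers the $\alpha$-excess there by exactly $1$: if $u$ is majority colored the excess was $\le 0$ and becomes $\le -1$; if $u$ is almost majority colored with overwhelming color $\alpha$ the excess was $+1$ and becomes $0$; if the overwhelming color differs from $\alpha$ then the $\alpha$-excess at $u$ was already $\le -1$ and becomes $\le -2$. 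In every case the $\alpha$-excess among the edges at $u$ other than $e$ is $\le 0$, and likewise at $v$, so the total over $N_{L(G)}(e)$ is $\le 0$. With this correction your argument is complete and matches the paper.
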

\begin{proof}
Let $G$ be a line graph of a graph $H$. Let $\mathcal{L}$ be any set of lists for vertices of $G$, each of size 2. Simultaneously, $\mathcal{L}$ is the set of lists for edges of $H$. By Theorem~\ref{main}, there exists an edge-coloring $c$ of $H$ from the set $\mathcal{L}$ of lists such that all vertices are majority colored, except possibly, some vertices of odd degree which are almost majority colored. Naturally, we color each vertex $e$ of $G$ with the color $c(e)$ of the corresponding edge $e=uv$ of $H$. It is easy to see that the cardinality of neighbors of $e$ in $G$ colored with $c(e)$ cannot be greater than that with other colors, even if one or two endvertices $x,y$ of $e$ have odd degree in $H$ and $c(e)$ is an overwhelming color. Thus we obtained a majority 2-vertex-coloring of the graph $G$. \end{proof}

Consequently, countable line graphs form another class of graphs satisfying the Unfriendly Infinite Conjecture.
\begin{cor}
The Unfriendly Infinite Conjecture holds for line graphs.     
\end{cor}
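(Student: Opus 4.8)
The plan is to reduce the vertex-coloring statement for a line graph $G = L(H)$ directly to the edge-coloring statement of Theorem~\ref{main} applied to $H$. First I would identify the list set $\mathcal{L}$ for the vertices of $G$ with a list set for the edges of $H$, since vertices of $L(H)$ are exactly edges of $H$. Applying Theorem~\ref{main} to $H$ with these lists yields an edge-coloring $c$ of $H$ in which every vertex of $H$ of non-odd degree is majority colored and every vertex of odd degree is majority colored or almost majority colored. (One must first dispose of the exceptional case in Theorem~\ref{main}: if $H$ were a finite graph of odd size, then $G$ would be finite, contradicting the hypothesis that $G$ has infinite order; so the exceptional outcome cannot occur.) I would then transport $c$ to a vertex-coloring of $G$ by coloring each vertex $e$ of $G$ with $c(e)$.

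The key step is to verify that this vertex-coloring of $G$ is a majority vertex-coloring. Fix an edge $e = xy$ of $H$, viewed as a vertex of $G$. Its neighbors in $G$ are the edges of $H$ incident to $x$ together with the edges incident to $y$, other than $e$ itself; thus the neighbors of $e$ colored $c(e)$ number exactly $(m_x - 1) + (m_y - 1)$ plus a correction if an edge is parallel to $e$, where $m_x$ (resp. $m_y$) is the number of edges at $x$ (resp. $y$) colored $c(e)$ — here I would be slightly careful with multigraph subtleties, but the paper works with graphs so I can take $N_G(e)$ to be the disjoint-ish union of the other edges at $x$ and at $y$. Since $x$ is majority or almost-majority colored, the number of edges at $x$ colored $c(e)$ exceeds the number colored otherwise by at most $2$, and likewise at $y$; removing $e$ from each side shifts this excess down by $1$ on the $c(e)$-side at each endpoint. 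A short case analysis on whether $x$ and $y$ are majority, almost-majority-with-overwhelming-color $c(e)$, or almost-majority-with-a-different-overwhelming-color, confirms that after deleting $e$ the count of $c(e)$-neighbors of $e$ does not exceed the count of differently-colored neighbors. The point is that each ``$+2$'' surplus at an endpoint is exactly cancelled by the ``$-1$'' from deleting $e$ on both the $c(e)$-side and... no: deleting $e$ removes one $c(e)$-colored edge from one side only, which reduces the surplus from $2$ to $0$, which suffices.

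The only genuine obstacle I anticipate is this endpoint bookkeeping, and in particular making sure the two endvertices being \emph{simultaneously} almost-majority colored with overwhelming color $c(e)$ does not combine to produce a surplus of $2$ among the neighbors of $e$ in $G$. As just noted, each such endpoint contributes a surplus of $2$ before deletion but only $0$ after removing the $c(e)$-colored edge $e$ from its incidence set, so the two contributions are each individually neutralized and their sum is still $\le 0$; the vertices of finite odd degree, which the edge-coloring only guarantees to be almost majority colored, thus cause no problem at the line-graph level. If the degree of $x$ is infinite, the majority condition there already gives that the $c(e)$-colored edges at $x$ have the same cardinality as all edges at $x$, so deleting one changes nothing and the inequality at $e$ holds trivially on that side. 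Assembling these observations gives the majority $2$-vertex-coloring of $G$, which is what the theorem asserts; and the subsequent corollary follows since countable line graphs are a special case.
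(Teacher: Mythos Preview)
Your approach is exactly the paper's: identify vertex-lists on $G=L(H)$ with edge-lists on $H$, apply Theorem~\ref{main}, and check the line-graph inequality at each edge. The paper's own proof of the preceding theorem is a two-sentence version of what you wrote, and the corollary is then immediate.

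There is, however, a genuine slip in your endpoint bookkeeping. Removing the $c(e)$-coloured edge $e$ from the incidence set of $x$ lowers the surplus $m_x-(d_H(x)-m_x)$ by exactly~$1$, not by~$2$: you go from $2m_x-d_H(x)$ to $2m_x-d_H(x)-1$. So if an endpoint really had surplus~$2$, deletion would leave surplus~$1$, and two such endpoints would combine to give a surplus of~$2$ among the neighbours of $e$ in $G$ --- precisely the failure you were trying to rule out. The repair is already implicit in Theorem~\ref{main}: outside the excluded finite-odd-size case, the only vertices that are merely almost majority coloured have \emph{odd} degree, and for odd degree the surplus $2m_x-d_H(x)$ is odd, hence equal to~$1$ (not~$2$). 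Then deleting $e$ brings each endpoint's contribution to~$0$, and the sum is $\le 0$ as required. You should invoke this parity observation explicitly rather than relying on the weaker bound ``at most~$2$'' coming from the bare definition of almost majority coloured.
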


%%%%%%%%%%%%%%%%%%%%%%%%%%%%%%%%%%%%%%%%%%%%%%%%%%%%%%%%%%%
%%%%%%%%%%%%%%%%%%%%%%%%%%%%%%%%%%%%%%%%%%%%%%%%%%%%%%%%%%%

\section{General bound}\label{inf}

In this section, we discuss a general upper bound for the least size of lists  in a majority edge-coloring of graphs of arbitrary order. Note that this problem is related to the well-known List Coloring Conjecture which is still open. However, for our purpose the following result of Borodin, Kostochka and Woodall \cite{BKW} suffices.

\begin{thm}{\rm (\cite{BKW})}
Let $G$ be a finite graph with maximum degree $\Delta(G)$, and let $\mathcal{L}$ be a set of lists, each of size $\lfloor\frac32\Delta(G)\rfloor$ assigned to edges of $G$. Then $G$ has a proper edge-coloring from these lists. 
\end{thm}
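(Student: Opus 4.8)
The plan is to prove the equivalent statement about the line graph: since a proper edge-coloring of $G$ from $\mathcal{L}$ is exactly a proper vertex-coloring of the line graph $L(G)$ from the same lists, it suffices to show that the list chromatic number of $L(G)$ is at most $k:=\lfloor\frac32\Delta(G)\rfloor$. I would establish this by the kernel method of Galvin. Recall that a \emph{kernel} of a digraph is an independent set $K$ such that every vertex outside $K$ has an out-neighbour in $K$, and an orientation is \emph{kernel-perfect} if each of its induced subdigraphs has a kernel. The first ingredient is the standard kernel lemma: if $D$ is a kernel-perfect orientation of a graph $H$ and every vertex $v$ is given a list $L(v)$ with $|L(v)|\ge d^+_D(v)+1$, then $H$ admits a proper coloring from these lists. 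I would prove this by the usual greedy-over-colors argument: fixing a color $\gamma$, let $U$ be the set of still-uncolored vertices whose list contains $\gamma$, take a kernel $K$ of the induced subdigraph $D[U]$, color all of $K$ with $\gamma$, and delete $\gamma$ from the remaining lists. Independence of $K$ keeps the coloring proper, while the domination property ensures that any still-uncolored vertex which loses $\gamma$ from its list simultaneously loses at least one out-neighbour, so the quantity $|L(v)|-d^+_D(v)$ never drops below $1$ and the hypothesis is preserved throughout.

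With this lemma in hand, the whole theorem reduces to the construction of a kernel-perfect orientation of $L(G)$ whose maximum out-degree is at most $k-1=\lfloor\frac32\Delta(G)\rfloor-1$. This is the heart of the matter. I would build the orientation from local preference orders: at each vertex $v$ of $G$ I impose a linear order on the edges incident with $v$, and I orient the arc of $L(G)$ joining two edges $e,f$ that meet at $v$ toward the one that is smaller in $v$'s order. The key structural fact, which I would verify as in Galvin's proof, is that an orientation assembled from such endpoint-preferences is kernel-perfect, because a kernel of any induced subdigraph corresponds to a stable matching for the induced preference system, and stable matchings always exist. Under this scheme the out-degree in $L(G)$ of an edge $e=uv$ equals the number of edges at $u$ ranked below $e$ plus the number of edges at $v$ ranked below $e$, so everything comes down to choosing the two preference orders at the endpoints of each edge so that these two counts sum to at most $\lfloor\frac32\Delta(G)\rfloor-1$.

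The main obstacle is precisely this balancing. For bipartite $G$ one has a proper $\Delta$-edge-coloring by König's theorem, and ordering the edges at each vertex by their color (increasingly on one side of the bipartition, decreasingly on the other) makes the two counts complementary and their sum at most $\Delta-1$; this recovers Galvin's bound. For an arbitrary finite graph no proper $\Delta$-coloring need exist, and the factor $\tfrac32$ is exactly the slack that absorbs the obstruction. I would start from a near-optimal proper edge-coloring into roughly $\Delta$ classes (via Vizing, or Shannon in the multigraph setting), split the color classes into two halves, and define the preference order at each endpoint so that an edge ranked high at one end is forced to be ranked low at the other; a careful count over the halved color classes should bound the sum of the two local out-degrees by $\lfloor\frac32\Delta(G)\rfloor-1$. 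I expect the two genuinely delicate points to be (i) confirming that the combined orientation is still kernel-perfect once the two endpoint orders are chosen independently, and (ii) pinning down the arithmetic so the bound is the sharp $\tfrac32\Delta(G)$ rather than merely $2\Delta(G)-1$; handling odd color classes and vertices of degree below $\Delta(G)$ is where the bookkeeping will be most fragile.
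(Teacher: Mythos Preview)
The paper does not prove this theorem at all: it is quoted from \cite{BKW} as a black box and used as a tool in the next proof. So there is no ``paper's own proof'' to compare against; any comparison has to be with the original Borodin--Kostochka--Woodall argument.

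Your overall strategy---reduce to a kernel-perfect orientation of $L(G)$ with out-degrees bounded by $\lfloor\tfrac32\Delta\rfloor-1$ and then apply the kernel lemma---is indeed the route taken in \cite{BKW}. But your justification of kernel-perfectness is wrong. You write that a kernel of an induced subdigraph ``corresponds to a stable matching for the induced preference system, and stable matchings always exist.'' That is Galvin's argument, and it is valid only for bipartite $G$, where Gale--Shapley guarantees a stable matching. In the non-bipartite case you are in the stable \emph{roommates} setting, and stable matchings need not exist. Concretely: take $G=K_3$ with edges $e_1,e_2,e_3$ and choose the three local preference orders so that the resulting orientation of $L(K_3)=K_3$ is a directed $3$-cycle; this digraph has no kernel. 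So arbitrary endpoint-preference orientations of $L(G)$ are \emph{not} kernel-perfect in general, and the step you flag as ``delicate point (i)'' is not just delicate---it fails outright without further input.

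What \cite{BKW} actually do is use the proper edge-coloring (from Shannon's theorem) not merely to balance out-degrees but to \emph{define} the orientation in a way that rules out these directed odd obstructions; the coloring is what certifies kernel-perfectness, not a stable-matching argument. Your sketch of ``splitting the color classes into two halves'' and forcing opposite ranks at the two endpoints presupposes a bipartition that does not exist, and you give no mechanism to prevent directed odd triangles. Until you supply a concrete orientation rule and a proof that it is kernel-perfect for arbitrary (non-bipartite) $G$, the argument has a genuine gap at exactly the point where the non-bipartite case diverges from Galvin's.
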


Using Tychonoff's theorem in a standard way, it is easy to show that this theorem is also true for infinite graphs. This result enables us to prove the following lower bound for the size of lists allowing a majority edge-coloring.

\begin{thm}
Let $G$ be a graph of arbitrary order and without pendant edges. Then $G$ admits a majority edge-coloring from any collection of lists, each of size $4$.
\end{thm}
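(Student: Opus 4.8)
The plan is to reduce the problem to the list edge-coloring result of Borodin–Kostochka–Woodall together with Theorem~\ref{main}, by partitioning the edge set of $G$ according to the local degrees. First I would handle the trivial reduction: since we may color each connected component independently, and a component that is a single edge cannot occur (no pendant edges), I may assume $G$ is connected with minimum degree at least $2$. The key structural step is to split $V(G)$ (or rather $E(G)$) into a ``small-degree part'' and a ``large-degree part'': let $A$ be the set of vertices of $G$ whose degree is at most some fixed small constant (I expect $2$ or $3$ will be the right threshold), and let $B=V(G)\setminus A$ be the vertices of large finite or infinite degree. The idea is that the two halves of each list of size $4$ — two colors reserved for edges near $A$, two colors reserved for edges near $B$ — can be used essentially independently, so that a majority coloring on each part glues to a majority coloring of $G$.

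Concretely, I would proceed as follows. For the subgraph $G[A]$ induced by the low-degree vertices: since $\Delta(G[A])$ is bounded by the constant, Theorem~\ref{BKW} (and its infinite version via Tychonoff, as noted in the excerpt) gives a \emph{proper} edge-coloring of $G[A]$ from the first two colors of each list, provided the threshold is chosen so that $2 \geq \lfloor \tfrac32 \Delta(G[A])\rfloor$, i.e.\ $\Delta(G[A])\le 1$; if the threshold is $2$ this forces $G[A]$ to be a union of paths and cycles, which is fine — but if a low-degree vertex $v$ has all its incident edges inside $A$ and its degree is $2$, a proper $2$-edge-coloring already makes it majority colored. A vertex $v\in A$ with some edges leaving $A$ is even easier: it has at most one or two incident edges, so \emph{any} assignment of distinct colors on them (possible since lists have size $4>2$) makes it majority colored, or leaves it with a harmless near-balance. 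For the subgraph $G[B]$ of high-degree vertices together with the edges from $B$ to $A$: here every vertex of $B$ has large degree, so losing a bounded number of edges (those going to $A$, of which there can be at most the threshold many per vertex of $A$, but unboundedly many per vertex of $B$ — I must check this does not spoil the balance) still leaves a vertex of infinite or large finite degree. On $G[B]$ I would invoke Theorem~\ref{main} with the \emph{last two} colors of each list: this produces an edge-coloring from lists of size $2$ in which every vertex of even or infinite degree is majority colored and every odd-degree vertex is at worst almost majority colored. Since every vertex of $B$ has large degree, ``almost majority colored'' plus a couple of extra incident edges (colored with the disjoint first-two-colors palette, hence never matching the overwhelming color) still yields majority colored in $G$.

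The main obstacle I anticipate is the interaction at the boundary between $A$ and $B$, and in particular vertices of $B$ of \emph{finite} (but larger than the threshold) degree: Theorem~\ref{main} only guarantees ``almost majority colored'' for odd-degree vertices, meaning the overwhelming color can exceed the rest by $2$; adding edges to $A$ could in principle worsen rather than fix this if those edges happen to carry colors that interact. The resolution must be that the two palettes are disjoint — edges inside $B$ use colors $\{3,4\}$ from each list and boundary edges $BA$ use colors $\{1,2\}$ — so an edge at a vertex $v\in B$ colored from $\{1,2\}$ can never be the overwhelming color of $v$ (which lies in $\{3,4\}$), and therefore each such extra edge strictly improves $v$'s balance; after accounting for all $BA$-edges at $v$ one checks $v$ becomes majority colored whenever it has at least two such edges or sufficiently large degree, and the remaining finitely many exceptional vertices of small finite degree were placed in $A$ by choice of threshold. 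I would also need to verify the edge case where lists are not all disjoint in the way I described — but since I am only \emph{restricting} each size-$4$ list to two arbitrary of its colors for one part and the other two for the other part, the two restricted lists are automatically disjoint within each list, which is exactly what the argument needs. Finally, a brief check that no connected component is finite of odd size with all even degrees in the relevant subgraph, or if it is, that the almost-majority slack it introduces is absorbed by the boundary edges just as above, completes the proof.
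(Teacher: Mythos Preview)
Your approach has a genuine gap that already shows up on cubic graphs. With your threshold (whether $2$ or $3$), every vertex of degree~$3$ lands in $B$, and if such a vertex $v$ has all three neighbors in $B$ then all its incident edges are colored via Theorem~\ref{main} from size-$2$ sublists. But $v$ has \emph{odd} degree in $G[B]$, so Theorem~\ref{main} only promises ``almost majority colored'': two of the three edges at $v$ may share a color~$\alpha$. Since majority at a degree-$3$ vertex requires all three incident edges to carry pairwise distinct colors, $v$ is not majority colored in $G$, and there are no $BA$-edges at $v$ to repair this. Concretely, take $G$ to be any $3$-regular graph with all lists equal to $\{1,2,3,4\}$: then $A=\emptyset$, $G[B]=G$, and any coloring from the two-element ``second palette'' $\{3,4\}$ forces two coincident colors at every vertex. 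Your escape clause ``the remaining finitely many exceptional vertices of small finite degree were placed in $A$ by choice of threshold'' cannot rescue this: degree-$3$ vertices are the entire obstruction, and pushing them into $A$ makes $\Delta(G[A])=3$, which needs lists of size $\lfloor 9/2\rfloor=4$ for Borodin--Kostochka--Woodall, not $2$.

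There is also a second, independent flaw in the palette-disjointness argument. You claim that an edge colored from its ``first two'' list-colors can never match the overwhelming color at $v\in B$, which comes from some other edge's ``last two'' list-colors. But disjointness holds only \emph{within a single list}: if a $B$-internal edge at $v$ has list $\{1,2,3,4\}$ and carries the overwhelming color $3$, while a $BA$-edge at $v$ has list $\{3,5,6,7\}$, then that $BA$-edge's first palette is $\{3,5\}$ and it may well be colored $3$, worsening rather than improving the balance at $v$.

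The paper's proof avoids all of this by a single structural move: it \emph{splits every vertex} of degree greater than~$3$ into several vertices of degree $2$ or $3$, obtaining a subcubic graph $G^*$ with the same edge set and the same lists. Then Borodin--Kostochka--Woodall (extended to infinite graphs via compactness) yields a \emph{proper} edge-coloring of $G^*$ from the full size-$4$ lists, since $\lfloor\tfrac{3}{2}\cdot 3\rfloor=4$. Transferring this coloring back to $G$, the edges at each original vertex are partitioned into blocks of size $2$ or $3$ that are internally rainbow, which immediately forces the majority condition. No list-splitting, no Theorem~\ref{main}, and no separate treatment of low- and high-degree vertices is needed.
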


\begin{proof}
Given a graph $G$, analogously as in \cite{BKPPRW} we construct a graph $G^*$ in the following way.
We split every vertex $v$ of degree greater than 3 in $G$ into a set of vertices $\{v_i:i\in I(v)\}$ of degrees 2 or 3 in $G^*$ by a suitable partition of its neighborhood $N_G(v)$. Naturally, if the degree $d_G(v)$ is infinite, then the cardinality of the index set $I(v)$ equals $d_G(v)$. Observe that there is a one-to-one correspondence between the edges of $G$ and $G^*$. So the list for an edge of $G^*$ is the same as the list for its counterpart in $G$. Each component of the graph $G^*$ is a countable subcubic graph. By the above result of Borodin, Kostochka and Woodall for countable graphs, there exists a proper coloring  of $G^*$ from any collection of lists of size 4. This coloring  transferred to the graph $G$ yields a majority coloring since for every $v\in V(G)$, the edges incident to any single vertex of $\{v_i:i\in I(v)\}$ have distinct colors. 
\end{proof}

This bound is tight. Namely, for each infinite cardinal $\kappa$, there exists a graph of order $\kappa$ which needs four colors in any majority edge-coloring. To see this, take any graph $G'$ of order $\kappa$, and a cubic finite graph $H$ of Class 2 which contains a vertex $v\in V(H)$ such that for every its incident edge $vw$ the subgraph $H-vw$ is still of Class 2 (Petersen graph is an example). Now, construct a graph $G$ by joining the graphs $G'$ and $H$ by an edge $uv$ where $u$ is any vertex of $G'$. It is easily seen that the graph $G$ does not admit a majority 3-edge-coloring since it would give a proper 3-edge-coloring of the subgraph $H-vw$, where $vw$ is an edge incident to $v$ in $H$.
We conjecture that subcubic Class 2 subgraphs are the only obstacle for the existence of a majority 3-edge-coloring, also for infinite graphs. For finite graphs, this conjecture was proposed in~\cite{BKPPRW}.

\begin{conj}
Every graph $G$ without pendant edges admits a majority $3$-edge-coloring unless $G$ contains a Class $2$ graph $H$ with $\Delta(H)=3$ as an induced subgraph, and with some vertices of $H$ of degrees at most $3$ in $G$.
\end{conj}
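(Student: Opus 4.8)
Since the final statement is a conjecture rather than a theorem, I would treat the following as the natural line of attack and indicate precisely why the crucial step resists proof. The plan is to reuse the vertex-splitting construction $G^*$ from the general bound, but to push the palette down from $4$ to $3$ by arranging that $G^*$ is properly $3$-edge-colorable. Recall that we split every vertex $v$ with $d_G(v)>3$ into pieces of degree $2$ or $3$, keeping a bijection between the edges of $G$ and those of $G^*$; a proper edge-coloring of $G^*$ transfers to $G$ as a majority coloring, because at each split piece the incident edges receive pairwise distinct colors, and these partial palettes reassemble at $v$ so that no color can dominate (a piece of size $2$ contributes at most one edge of a fixed color $\alpha$ against one edge of another color, and a piece of size $3$ at most one against two, so $|\{\alpha\text{-edges at }v\}|\le|\{\text{other edges at }v\}|$; this holds verbatim for infinite $d_G(v)$ by the cardinal arithmetic already used in the general-bound proof). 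The easy regime is thus already covered: for finite $G$ with $\delta(G)\ge 4$, Theorem~\ref{fin3} gives a majority $3$-edge-coloring outright, so all the content lies in low-degree vertices and in infinite orders, and it suffices to choose the splitting so that the subcubic graph $G^*$ is Class~$1$.

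The core step is to show that the splitting can be chosen to make $G^*$ Class~$1$ unless the forbidden obstruction occurs. By Vizing's theorem a connected subcubic graph is $3$-edge-colorable iff it is Class~$1$, so $G^*$ fails only by containing a Class~$2$ subgraph of maximum degree $3$. I would isolate the \emph{rigid} part of $G^*$: vertices with $d_G(v)\le 3$ are never split and survive unchanged, whereas vertices with $d_G(v)\ge 4$ are split into pieces whose local attachment we may design. Hence every obstruction to Class~$1$ must be carried by an induced subgraph supported on low-degree vertices of $G$, which is exactly the locus of the conjectured forbidden subgraph $H$. The goal is to prove that, absent any induced Class~$2$ subcubic $H$ satisfying the stated degree condition, the high-degree vertices can be split -- preferentially into degree-$2$ pieces, which behave like subdivisions and tend to destroy odd obstructions -- so that no Class~$2$ subgraph survives in $G^*$.

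For infinite $G$ the coloring phase is handled cheaply. By the de~Bruijn--Erd\H{o}s/Tychonoff compactness argument already invoked in the general-bound proof, $G^*$ is properly $3$-edge-colorable iff every finite subgraph of $G^*$ is Class~$1$; this reduces the infinite coloring problem to a purely finite structural condition on $G^*$, consistent with the fact that the conjecture was first posed for finite graphs. The remaining infinite content is organizational -- constructing a single global splitting that simultaneously meets this finite Class~$1$ condition -- and here I would deploy the machinery of Theorem~\ref{main}: a maximal ray/double-ray decomposition via Theorem~\ref{Lem:1}, together with transfinite induction on the Schmidt rank in the rayless case, to assemble the splitting component by component. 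None of these steps is expected to be problematic in itself.

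The main obstacle, and the reason this remains open even for finite graphs, is the core step of the second paragraph. There is no usable characterization of subcubic Class~$2$ graphs, so certifying that \emph{some} admissible splitting eliminates all Class~$2$ subgraphs is delicate. The difficulty is global rather than local: splitting the high-degree vertices can weld otherwise innocuous low-degree fragments of $G$ into a fresh Class~$2$ subgraph, so all splittings must be controlled simultaneously. The Petersen-based construction in the discussion preceding the conjecture shows that the exception clause is genuinely necessary -- a rigid cubic Class~$2$ core attached to the rest of the graph by a single edge at a degree-$4$ vertex already blocks every $3$-edge-coloring, since any majority $3$-coloring would properly $3$-color a Class~$2$ graph $H-vw$. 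Converting the heuristic that degree-$2$ splittings subdivide away odd obstructions into a rigorous, exhaustive argument is precisely the hard part, and it is where I expect the proof to stall.
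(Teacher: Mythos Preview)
The statement is a \emph{conjecture}; the paper offers no proof of it, only the motivating tightness example (the Petersen-graph construction) and the supporting result Theorem~\ref{inf3}. So there is nothing in the paper to compare your attempt against at the level of a proof.

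You recognize this correctly and present a strategy rather than a proof. Your plan --- reuse the vertex-splitting graph $G^*$ from the general-bound argument and try to arrange the splitting so that $G^*$ is Class~$1$, reducing the infinite case by compactness and the rank/ray machinery of Theorem~\ref{main} --- is a natural one and is fully consistent with the paper's toolkit. Your identification of the hard step (no structural handle on subcubic Class~$2$ graphs, and the global interaction between splittings) is accurate and matches why the authors leave the statement open. The paper's own ``evidence'' runs on a different track: rather than attempting the splitting argument, it proves Theorem~\ref{inf3} directly for $\delta(G)\ge 4$ by combining Lemma~\ref{f3}, a K\H{o}nig-lemma compactness (Lemma~\ref{count3}), and a careful partition of the infinite-degree vertices. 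That route sidesteps the Class~$1$/Class~$2$ issue entirely but says nothing about graphs with vertices of degree $2$ or $3$, which is exactly where your obstruction lives.
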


The following extension of Theorem~\ref{fin3} for infinite graphs supports this conjecture.

\begin{thm}\label{inf3}
If $G$ is an infinite graph of minimum degree $\delta(G)\geq 4$, then $G$ admits a majority $3$-edge-coloring.
\end{thm}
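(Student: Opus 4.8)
The plan is to reduce the infinite case to the finite case of Theorem~\ref{fin3} by a splitting construction analogous to the one used in the proof of the size-$4$ list bound, combined with a compactness argument. First I would replace each vertex $v$ of infinite degree in $G$ by a set of vertices $\{v_i : i\in I(v)\}$, partitioning $N_G(v)$ so that each $v_i$ inherits degree at least $4$ (and, say, at most some fixed finite bound); this is possible exactly because $\delta(G)\ge 4$, so every infinite-degree neighbourhood can be chopped into countably many blocks each of size in $\{4,\dots,7\}$. Vertices of finite degree $\ge 4$ are left untouched. Call the resulting graph $G^\star$; there is a natural bijection between $E(G)$ and $E(G^\star)$, and every component of $G^\star$ is a countable graph with $\delta\ge 4$ and bounded maximum degree. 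Any $3$-edge-coloring of $G^\star$ in which each vertex of $G^\star$ is majority colored transfers back to a majority $3$-edge-coloring of $G$, since the edges at $v$ in $G$ are partitioned into the (pairwise majority-colored) bundles at the $v_i$.

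So it suffices to handle a \emph{countable} graph $H$ with $\delta(H)\ge 4$. Here I would use a standard compactness/Tychonoff argument: enumerate $V(H)=\{u_1,u_2,\dots\}$ and take an exhausting sequence of finite subgraphs $H_1\subseteq H_2\subseteq\cdots$ with $\bigcup_n H_n=H$. One cannot directly apply Theorem~\ref{fin3} to $H_n$ because truncation may create low-degree vertices; instead I would colour, for each $n$, a finite graph $H_n'$ obtained from a suitable finite subgraph by adding auxiliary edges or small gadgets at the ``boundary'' vertices to restore minimum degree $4$, apply Theorem~\ref{fin3} to get a finite majority $3$-edge-coloring, and then restrict. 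The space $[3]^{E(H)}$ with the product topology is compact; the set of colorings that agree with some valid finite coloring on every finite edge set is closed and, by the finite colorings just produced together with a diagonal/König-type selection, nonempty. A limit point of this family is a $3$-edge-coloring of $H$; one checks that every vertex $u_k$ of $H$, having finite degree (all degrees in a countable graph of our sort are finite since we only split infinite-degree vertices, but in general some $u_k$ may still have infinite degree) — if $d_H(u_k)$ is finite, the limit coloring restricted to the finitely many edges at $u_k$ coincides with one coming from a finite coloring in which $u_k$ was majority colored, so $u_k$ is majority colored; if $d_H(u_k)$ is infinite, one argues directly that a proper-enough or balanced-enough coloring makes it majority colored, which is automatic once no colour class at $u_k$ is cofinite.

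The main obstacle I expect is the boundary-repair step in the compactness argument: when we pass to a finite subgraph $H_n$, its boundary vertices lose degree, so Theorem~\ref{fin3} does not apply verbatim, and we must add edges or gadgets without corrupting the majority condition at interior vertices or at the already-fixed edges from earlier stages. The clean way around this is to not truncate at all on the combinatorial side but to run the compactness argument with constraints only at vertices whose \emph{entire} incident edge set lies inside the current finite piece: for such a vertex the finite coloring genuinely controls its majority status, and these vertices exhaust $V(H)$ as $n\to\infty$. Making the finite colorings mutually compatible (so that the product-space argument yields a single global coloring) is then the routine König-lemma bookkeeping, and handling the rare infinite-degree vertices that survive splitting (if one allows them) is easy since for such a vertex being majority colored only requires that no colour is used on cofinitely many incident edges, which holds as long as the finite colorings we glue never commit a cofinite colour class. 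With this reorganisation the proof goes through cleanly.
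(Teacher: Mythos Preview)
Your approach is correct, and it is genuinely different from---and cleaner than---the paper's own proof. The paper does \emph{not} split infinite-degree vertices for Theorem~\ref{inf3}; instead it keeps them and treats them ``in place'': it first majority $2$-edge-colors the subgraph $G[V_\infty]$ induced by the infinite-degree vertices (via Corollary~\ref{maj2}), then carefully partitions the components of the finite-degree subgraph and attaches them to vertices of $V_\infty$ in groups, applying Lemmas~\ref{f3} and~\ref{count3} to each group separately so that each infinite-degree vertex sees, over its incident edges, a union of locally majority-colored pieces. Both proofs converge on essentially the same compactness step for locally finite pieces (the paper's Lemma~\ref{count3} is exactly your K\H{o}nig/Tychonoff argument with $K_5$-gadgets at boundary vertices, which is Lemma~\ref{f3}). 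What your route buys is a much shorter reduction: by reusing the vertex-splitting trick the paper already deploys for the size-$4$ list bound, you kill all infinite-degree vertices in one stroke and land immediately in the locally finite case, bypassing the $V_\infty^1$/$V_\infty^2$/$\mathcal{B}(v)$ bookkeeping entirely. What the paper's route buys is, arguably, nothing here---your argument is simply simpler.

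Two small presentational wobbles in your write-up that you should clean up, though neither is a real gap: (i) $G^\star$ need not have \emph{bounded} maximum degree, since you leave finite-degree vertices untouched and these can have arbitrarily large finite degree; all you need (and have) is that $G^\star$ is locally finite, hence every component is countable. (ii) Your hedge about ``infinite-degree vertices that survive splitting'' is unnecessary and a bit confusing: after your splitting step there are none, so the compactness argument only ever meets finite-degree vertices and the clause about cofinite colour classes can be dropped.
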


In the proof, we will make use of the following two lemmas.

\begin{lem}\label{f3}
If $H$ is a finite graph, then there exists a $3$-edge coloring of $H$ for which every vertex $v$ of degree $d_H(v)\geq 4$ is majority colored.
\end{lem}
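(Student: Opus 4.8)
The plan is to reduce the statement to a classical result on edge-colorings of multigraphs and then handle the vertices of small degree by a gentle perturbation. First I would dispense with the vertices of degree at most $3$: they are allowed to be arbitrary, so the only constraint is at vertices $v$ with $d_H(v)\ge 4$, where we need each of the three colors to appear on at most $\lfloor d_H(v)/2\rfloor$ incident edges. Since $\lfloor d/2\rfloor\ge\lceil d/3\rceil$ for all $d\ge 4$ (the worst case $d=4$ gives $2\ge 2$, $d=5$ gives $2\ge 2$, and for $d\ge 6$ the gap only widens), it suffices to produce a $3$-edge-coloring in which, at every vertex of degree $\ge 4$, no color class exceeds $\lceil d_H(v)/3\rceil$ — in other words, an ``equitable'' or ``balanced'' $3$-edge-coloring at those vertices.

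The key step is to invoke a balanced-coloring theorem: by a theorem of de Werra (or, equivalently, the classical result that the edge set of any multigraph can be partitioned into $k$ classes each of which is a near-perfect matching in the sense that at every vertex $v$ the class has $\lfloor d(v)/k\rfloor$ or $\lceil d(v)/k\rceil$ edges), the finite graph $H$ admits a $3$-edge-coloring in which the numbers of edges of the three colors at each vertex $v$ differ pairwise by at most one. For such a coloring, at a vertex of degree $d$ the largest color class has size $\lceil d/3\rceil$, which by the inequality above is at most $\lfloor d/2\rfloor$ whenever $d\ge 4$. Hence every vertex of degree at least $4$ is majority colored, and we are done. I would phrase this so as not to depend on the List Coloring Conjecture or on $\Delta(H)$; the balanced decomposition into three near-regular subgraphs is elementary (it follows, e.g., from repeatedly splitting off Eulerian subgraphs after adding a dummy vertex to fix parities, or directly from de Werra's augmenting-path argument).

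The main obstacle, such as it is, is purely bookkeeping: making sure the arithmetic $\lceil d/3\rceil\le\lfloor d/2\rfloor$ is checked for the small cases $d\in\{4,5\}$ by hand and then observed to hold a fortiori for $d\ge 6$, and making sure that the balanced-coloring theorem is cited in a form valid for multigraphs (loops and parallel edges) rather than only simple graphs, since $H$ in the ambient construction may well have multiedges. No genuine difficulty arises from the vertices of degree $1,2,3$, because the lemma imposes no condition on them; they simply inherit whatever colors the balanced coloring assigns.
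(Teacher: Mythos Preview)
Your argument is correct: de Werra's balanced edge-coloring theorem gives a $3$-edge-coloring in which each color appears at most $\lceil d_H(v)/3\rceil$ times at every vertex $v$, and the check $\lceil d/3\rceil\le\lfloor d/2\rfloor$ for $d\ge 4$ is exactly what is needed for the majority condition.

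The paper takes a different route. Rather than invoking a balanced-coloring result, it attaches a copy of $K_5$ at every vertex of degree less than $4$ (sharing only that vertex with $H$) to obtain a supergraph $H'$ with $\delta(H')\ge 4$, applies Theorem~\ref{fin3} from~\cite{BKPPRW} to get a majority $3$-edge-coloring of $H'$, and then deletes the attached $K_5$'s. Your approach is more self-contained in the sense that it does not rely on Theorem~\ref{fin3} (so in particular it could replace the chain Theorem~\ref{fin3} $\Rightarrow$ Lemma~\ref{f3} by a single appeal to de Werra), and it actually yields a stronger conclusion, namely a coloring balanced at \emph{every} vertex. The paper's approach, on the other hand, keeps the argument internal to the majority-coloring framework already set up and avoids importing an additional external theorem. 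One small remark: your worry about multigraphs is unnecessary here, since in the paper $H$ is a simple graph; but it does no harm that de Werra's theorem covers that case too.
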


\begin{proof}
To every vertex $u$ with $d_H(u)<4$ we attach an additional graph $K_5$, such that $u$ is the unique common vertex of $H$ and the $K_5$.  Thus we obtain a supergraph $H'$ with minimum degree at least 4. By Theorem~\ref{fin3}, there exists a majority 3-edge-coloring of $H'$. After removing all additional graphs $K_5$ we get a desired 3-edge coloring of $H$.
\end{proof}

\begin{lem}\label{count3}
Let $H$ be a countably infinite graph that contains a connected subgraph $F$ of vertices with finite degrees at least $4$ in $H$ such that each vertex $v\in V(H)\setminus V(F)$ has a neighbor in $F$. Then $H$ admits a $3$-edge-coloring such that all vertices in $F$ and all vertices of infinite degree in $H$ are majority colored. In particular, every  locally finite graph $H$ with $\delta(H)\geq 4$ has a majority $3$-edge-coloring.
\end{lem}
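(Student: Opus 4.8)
The plan is to build the $3$-edge-coloring of $H$ by a greedy transfinite (here just countable) process along an enumeration of the edges, where at each step we color one uncolored edge while maintaining a suitable invariant that forces every vertex of $F$ and every vertex of infinite degree in $H$ to be majority colored in the end. Concretely, enumerate $E(H)=\{e_1,e_2,\dots\}$; at stage $n$ we color $e_n$ (if not already colored). The key is to choose the invariant so that (i) it can always be restored with one of the three colors available, and (ii) any coloring satisfying the invariant at every finite vertex $v$ of $F$ forces $v$ to be majority colored. Since $d_H(v)\ge 4$ for $v\in V(F)$ and we have three colors, a natural invariant is: \emph{no color appears on more than $\lceil d/3\rceil$ of the edges at a finite-degree vertex $v$ with $d=d_H(v)$} — but for $d\ge 4$ this already guarantees the majority condition, because the other two color classes together have size at least $d-\lceil d/3\rceil\ge\lceil d/3\rceil$. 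So it suffices to color the edges so that, at every finite vertex of degree $\ge 4$, the three color classes are as balanced as possible (differ by at most one), which is exactly a proper-type balancing condition; at vertices of infinite degree we only need each of the three colors to be \emph{avoided} on infinitely many incident edges, equivalently each color class has size $d_H(v)$.

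First I would reduce to the connected case and fix a root $r\in V(F)$. Next, I would observe that the hypothesis gives a natural "layered" structure: let $F_0=F$ and, thinking of $F$ as a dominating connected subgraph, every vertex outside $F$ is at distance $1$ from $F$. I would then process the finite vertices of $H$ (those in $F$, and the finite-degree vertices adjacent to $F$) in a way that controls the balance, handling the finite-degree vertices not in $F$ last since they carry no constraint of their own. For the balancing at finite vertices of degree $\ge4$: take an exhaustion $F=\bigcup_k F_k$ by finite connected subgraphs with $F_k\subseteq F_{k+1}$; by Lemma~\ref{f3} applied to a suitable finite supergraph of $F_k$ (padding low-degree boundary vertices with copies of $K_5$ as in that lemma), get a $3$-edge-coloring of $F_k$ in which all vertices of degree $\ge4$ \emph{in $F_k$} are majority colored. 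The difficulty is that these finite colorings need not be consistent; the standard remedy is a compactness / König's-lemma argument: the colorings of $E(F_k)$, restricted to any fixed finite edge set, take finitely many values, so by König's infinity lemma there is a single coloring of $E(F)$ all of whose finite restrictions arise as restrictions of the $F_k$-colorings — hence in which every vertex of $F$ (all of finite degree $\ge4$) is majority colored. Alternatively, and perhaps cleaner, color $E(F)$ directly by the greedy argument of the first paragraph, using that $F$ is countable and that at each finite-degree vertex the balance invariant is restorable.

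After $F$ is colored, I would color the remaining edges — those incident with a vertex of $V(H)\setminus V(F)$ — as follows. Each $v\in V(H)\setminus V(F)$ has a neighbor in $F$, so its edge to $F$ may already be colored; for the edges among $V(H)\setminus V(F)$ and from $v$ to its other neighbors we again color greedily in an enumeration. At a vertex $v$ of \emph{infinite} degree, the requirement is only that each of the three colors is missed infinitely often among the edges at $v$; this is trivially arranged greedily (whenever we color an edge at $v$, we can always keep two colors "fresh" since only finitely many edges at $v$ are colored so far, and infinitely many remain). At a vertex $v\notin V(F)$ of \emph{finite} degree we impose no constraint, so its edges can be colored arbitrarily subject only to the constraints coming from the other endpoint. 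Thus every edge gets a color, and the only vertices we ever constrained — finite vertices of degree $\ge4$ (which lie in $F$ by construction of what we balanced) and infinite-degree vertices — end up majority colored.

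The last sentence of the lemma is then immediate: if $H$ is locally finite with $\delta(H)\ge4$, take $F=H$ (it is connected if $H$ is, and one may treat components separately), and the construction gives a majority $3$-edge-coloring of all of $H$.

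\medskip
\noindent\textbf{Main obstacle.} The real work is the consistency/compactness step that upgrades the finite balanced colorings supplied by Lemma~\ref{f3} (or by Theorem~\ref{fin3}) to a single coloring of the infinite graph $F$ without destroying the majority property at any vertex; equivalently, showing that the greedy "keep the three color classes nearly balanced at every finite vertex of degree $\ge4$" invariant is always restorable with one of the three colors. Everything after $F$ is colored — extending to the dominated vertices and to infinite-degree vertices — is routine greedy bookkeeping, because infinite-degree vertices impose an easily maintained "avoid-infinitely-often" condition and the remaining finite vertices impose no condition at all.
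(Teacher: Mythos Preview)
Your K\H{o}nig/compactness instinct is the right one and is essentially how the paper proceeds, but the decomposition you build around it has a genuine gap.

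The problem is the two-phase plan ``first colour $F$, then extend greedily to the rest of $H$''. A vertex $v\in V(F)$ has finite degree $d_H(v)\ge 4$ \emph{in $H$}, not in $F$; in general $v$ has neighbours in $V(H)\setminus V(F)$, and those edges are not in $E(F)$. So a colouring of $F$ in which $v$ is balanced \emph{in $F$} says nothing about the majority condition \emph{in $H$}, and your subsequent greedy extension to the remaining edges at $v$ has no mechanism to preserve it. Padding the boundary of $F_k$ with copies of $K_5$ only makes this worse: you are balancing colours at $v$ against dummy edges that then disappear, so the bound obtained is on the wrong degree. Your greedy alternative is likewise unjustified: the invariant ``every colour at most $\lceil d/3\rceil$'' (or ``colour classes differ by at most one'') is not restorable in general, since both endpoints of an edge can simultaneously force different single colours---e.g.\ at two adjacent vertices of degree $6$ with five incident edges already coloured $(2,2,1)$ but with different minority colours.

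The fix, and what the paper does, is to run the compactness argument on the right finite pieces. Take balls $B_n$ in $F$ about a fixed root and apply Lemma~\ref{f3} not to $B_n$ (or a padded $B_n$) but to $B'_n:=H[V(B_n)\cup N_H(B_n)]$. The point is that every $v\in V(B_n)$ then has $d_{B'_n}(v)=d_H(v)\ge 4$, so Lemma~\ref{f3} delivers the majority property at $v$ with respect to its true $H$-degree. Because every vertex of $H$ (including every vertex outside $F$, which by hypothesis has a neighbour in $F$) lies in $B'_n$ for all large $n$, the $B'_n$ exhaust $H$, and K\H{o}nig's infinity lemma produces a single $3$-edge-colouring of all of $H$ at once; there is no separate extension step. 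Infinite-degree vertices come for free: such a vertex has degree at least $4$ in $B'_n$ for all sufficiently large $n$, hence is majority coloured in every such $B'_n$ and therefore in the limit.
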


\begin{proof}
We pick a vertex $v_0$ in $F$. For every positive integer $n$, we consider the ball $B_n$ in $F$ of radius $n$ centered at $v_0$, i.e. the subgraph of $F$ induced by all vertices $v\in V(F)$ such that there is a $v_0v$-path of length at most $n$ in $F$. Furthermore, let $B'_n=H[V(B_n)\cup N_H(B_n)]$. Thus $B'_n$  contains all vertices of the ball $B_{n+1}$ in $F$, as well as every vertex outside $F$ having a neighbor in $B_n$. Observe that $d_{B'_n}(v)=d_H(v)\geq 4$ for each vertex $v\in V(B_n)$. By Lemma~\ref{f3}, there exists a 3-edge-coloring of $B'_n$ such that each vertex with degree at least 4 is majority colored. 

Let $V_n$ denote the set of all 3-edge-colorings of $B'_n$ such that every vertex of degree at least 4 in $B'_n$ is majority colored. We have just shown that $V_n\neq \emptyset$. Moreover, if $c\in V_n$, then its restriction to $B'_{n-1}$ belongs to $V_{n-1}$. We now define a graph $\Gamma$ with a vertex set $V(\Gamma)=\bigcup_{n=1}^{\infty}V_n$ by inserting all edges $cc'$ between $c\in V_n$ and $c'\in V_{n-1}$ whenever $c'$ is the restriction of $c$ to $B_{n-1}$. Hence, every vertex in $V_n$ has a neighbor in $V_{n-1}$. By K\H{o}nig's Infinity Lemma, the graph $\Gamma$ contains a ray $c_1c_2\ldots$, where $c_n\in V_n$. Then $\bigcup_{n=1}^{\infty} c_n$ is a 3-edge-coloring of $H$ such that every vertex $v$ of degree $d_H(v)\geq 4$ is majority colored. To see that every vertex $v$ of infinite degree in $H$ is majority colored, it suffices to note that $d_{B'_n}(v)\geq 4$ for almost all $n$. 
\end{proof}

Now, we can proceed to the proof of Theorem~\ref{inf3}.

\begin{proof} 
If $G$ is locally finite, then the claim follows from Lemma~\ref{count3}. Otherwise, let $V_{\infty}$ be the set of all vertices with infinite degree in $G$, and let $\mathcal{B}$ be the family of all components of the subgraph of $G$ induced by vertices of finite degrees. Clearly, each such component is countable. Moreover, let 
$$V_{\infty}^1=\{v\in V_{\infty}: d_G(v)=d_{G[V_{\infty}]}(v)\}.$$
By Corollary~\ref{maj2}, the subgraph $G[V_{\infty}]$ admits a 2-edge-coloring such that every vertex of infinite degree is majority colored. In particular, every vertex of $V_{\infty}^1$ is already majority colored regardless of colors of the other edges incident to it.  

Now, for every vertex $v\in V_{\infty}\setminus V_{\infty}^1$, consider the subfamily $\mathcal{B}(v)$ of all components $B\in \mathcal{B}$ such that the degree of $v$ in $G[V(B)\cup\{v\}]$ is positive but smaller than 4. Let $V_{\infty}^2$ denote the set of vertices $v$ in $V_{\infty}\setminus V_{\infty}^1$ such that the cardinality of the set of neighbors of $v$ in $\bigcup (\mathcal{B}\setminus \mathcal{B}(v))$ is smaller than $d_G(v)$. To each vertex $v\in V_{\infty}^2$ we assign a set $\mathcal{B}_0(v)$ of $d_G(v)$ components from $\mathcal{B}(v)$ in such a way that distinct vertices from $V_{\infty}^2$ get disjoint sets $\mathcal{B}_0(v)$. This can be done since the cardinality of $V_{\infty}^2$ is not greater than $\bigcup\{\mathcal{B}(v):v\in V_{\infty}^2\}$, for each component $B\in \mathcal{B}$ is adjacent to at most countably many vertices of $V_{\infty}$. We next replace the components in $\mathcal{B}_0(v)$ by finite unions $B$ of them such that the degree of $v$ in $G[B\cup\{v\}]$ is at least 4, thus replacing the set $\mathcal{B}_0(v)$ by a~set $\mathcal{B}'_0(v)$.

We now argue as follows. For every vertex $v\in V_{\infty}^2$ and every subgraph $B\in \mathcal{B}'_0(v)$, consider a subgraph $B'$ of $G$ induced by $V(B)$ and all vertices having a neighbor in $B$. Hence, $d_{B'}(v)\geq 4$. The subgraph $B'$ admits a 3-edge coloring such that every its vertex with degree at least 4 is majority colored. This way, all vertices of $B$ are majority colored. This follows from Lemma~\ref{f3} if $B'$ is finite, or from Lemma~\ref{count3} if $B'$ is countably infinite. Moreover, every vertex $v\in V_{\infty}^2$ is majority colored regardless of colors of its other incident edges because for any two colors, out of three, the cardinality of the set of  edges incident to $v$ with these two colors equals $d_G(v)$.

Suppose that there remain components $B\in \mathcal{B}$ whose edges are not colored yet. For every such component, we analogously consider a subgraph $B'$ of $G$ induced by $V(B)$ and all vertices having a neighbor in $B$. By Lemma~\ref{f3} or Lemma~\ref{count3}, there exists a~3-edge-coloring of $B'$ such that every vertex with degree at least 4 in $B'$ is majority colored. Therefore, every vertex of $B$ is majority colored. This is also the case for every vertex $v\in V(B)\cap(V_{\infty}\setminus (V_{\infty}^1\cup V_{\infty}^2))$ since the degree of such a vertex in $B'$ is at least 4. 

Consequently, every vertex of $V_{\infty}$ is now majority colored in $G$ because, for every vertex of $V_{\infty}\setminus (V_{\infty}^1\cup V_{\infty}^2)$, the set of its incident edges is partitioned into subsets in which the cardinality of edges of the same color is not greater than the cardinality of edges with other colors. Thus we obtain a majority 3-edge-coloring of $G$.
\end{proof}

We conclude with the following conjecture for finite and infinite graphs.
\begin{conj}
Every graph with minimum degree at least $4$ admits a majority edge-coloring from lists of size $3$.
\end{conj}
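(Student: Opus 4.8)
The plan is to follow the architecture of the proof of Theorem~\ref{inf3} and reduce everything to a single finite statement: the list analogue of Theorem~\ref{fin3}, namely that every \emph{finite} graph $G$ with $\delta(G)\ge 4$ admits a majority edge-coloring from any assignment of lists of size $3$. I expect this finite statement to carry the entire difficulty; the passage to infinite graphs of arbitrary cardinality should then be routine.

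Granting the finite list statement, first I would record its consequences exactly as in Section~\ref{inf}. Attaching a copy of $K_5$ at every vertex of degree $<4$, with arbitrary lists on the new edges, gives the list version of Lemma~\ref{f3}: a $3$-list-edge-coloring of any finite $H$ in which every vertex of degree at least $4$ is majority colored. Feeding this into the ball-exhaustion and K\H{o}nig's Infinity Lemma argument of Lemma~\ref{count3} verbatim yields the list version of Lemma~\ref{count3}. Then I would rerun the proof of Theorem~\ref{inf3}: split off the set $V_\infty$ of vertices of infinite degree, colour the edges inside $G[V_\infty]$ using a $2$-element sublist of each list by Theorem~\ref{main} (so every vertex of infinite degree is majority colored there regardless of its remaining edges), and colour each finite-degree component $B$, together with the vertices adjacent to it, by the list versions of Lemma~\ref{f3} or Lemma~\ref{count3}; the bookkeeping with $V_\infty^1,V_\infty^2$ is unchanged. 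Since the components of the subgraph induced by finite-degree vertices are countable and $V_\infty$ is treated through Theorem~\ref{main}, which holds for all cardinalities, this also covers arbitrary order.

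For the finite list statement itself, note that when all lists in play equal a fixed $3$-set the statement is exactly Theorem~\ref{fin3}, so the point is to use the extra freedom present when lists differ. I would first try to adapt the proof of Theorem~\ref{fin3} from~\cite{BKPPRW}: if that argument runs through an Eulerian (equivalently $2$-in/$2$-out) orientation of $G$, which exists because $\delta(G)\ge4$ once odd-degree vertices are paired up by auxiliary edges, one checks whether each local choice can still be made inside the prescribed lists, mirroring the way Lemma~\ref{Lem:xd} and Lemma~\ref{Lem:XD} upgrade an unlisted Euler-tour colouring to the listed one. Failing a direct adaptation, the fallback is probabilistic: colour each edge independently and uniformly from its list, declare a bad event at a vertex $v$ when some colour appears on more than $\lfloor d(v)/2\rfloor$ of its incident edges, bound this by a Chernoff estimate (small only once $d(v)$ is large), and close via the Lov\'asz Local Lemma, after first treating separately the vertices of small degree, whose bad events are not rare.

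The main obstacle is precisely those small-degree vertices. At a vertex of degree $5$ the majority condition forces the colour distribution to be exactly $(2,2,1)$, so there is essentially no slack, and a uniformly random $3$-list-colouring violates it with constant probability; degree-$4$ vertices, where the distribution must be $(2,2,0)$ or $(2,1,1)$, and to a lesser degree vertices of degree $6$ and $7$, are similarly rigid. Hence the argument cannot be purely local-lemma based: one must colour the edges touching the subgraph spanned by low-degree vertices first, by a careful deterministic procedure, and only then extend. But a correction made for one low-degree vertex may spoil an adjacent one, and, unlike the size-$2$/Euler-tour setting where a single traversal resolves all conflicts, with size-$3$ lists and $\delta\ge4$ it is unclear that the corrections compose into a single pass. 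Making them converge --- whether by a clever ordering, a Moser--Tardos-style entropy-compression argument, or a structural decomposition of $G$ into pieces each anchored at a high-degree vertex --- is where I expect the real work to lie, and presumably why the statement is still only a conjecture.
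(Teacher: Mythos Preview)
The statement you are attempting to prove is a \emph{conjecture} in the paper, not a theorem; the paper offers no proof of it. There is therefore nothing to compare your proposal against. You yourself recognise this at the end (``presumably why the statement is still only a conjecture''), and your write-up is really a proof \emph{strategy} with an explicitly identified gap, not a proof.

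That said, your reduction to the finite case is sound and follows the paper's architecture for Theorem~\ref{inf3} faithfully: the list versions of Lemma~\ref{f3} and Lemma~\ref{count3} would indeed follow from a finite list analogue of Theorem~\ref{fin3}, and the $V_\infty$, $V_\infty^1$, $V_\infty^2$ bookkeeping carries over verbatim. The genuine obstacle is exactly where you locate it: the finite statement ``every finite $G$ with $\delta(G)\ge 4$ is majority edge-colorable from lists of size $3$'' is open, and the rigidity at degrees $4$ and $5$ blocks both a naive Euler-tour adaptation and a pure Local Lemma argument. Until that finite kernel is resolved, the conjecture remains open, and your proposal does not close it.
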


%%%%%%%%%%%%%%%%%%%%%%%%%%%%%%%%%%%%%%%%%%%%%%%%%%%%%%%%%%%%%%%
\bibliographystyle{abbrv}
\bibliography{lit.bib}

\begin{thebibliography}{10}

\bibitem{AhMiPr}
R.~{Aharoni}, E.~{Milner}, and K.~{Prikry}.
\newblock {Unfriendly partitions of a graph}.
\newblock {\em {J. Combin. Theory Ser. B}}, 50(1):1--10, 1990.

\bibitem{Berger}
E.~{Berger}.
\newblock {Unfriendly partitions for graphs not containing a subdivision of an
  infinite clique.}
\newblock {\em {Combinatorica}}, 37(2):157--166, 2017.

\bibitem{BKPPRW}
F.~{Bock}, R.~{Kalinowski}, J.~{Pardey}, M.~{Pil\'sniak}, D.~{Rautenbach}, and
  M.~{Wo\'zniak}.
\newblock {Majority Edge-Colorings of Graphs.}
\newblock {\em {Electron. J. Combin.}}, 30:\#P1.42, 2023.

\bibitem{BKW}
O.~V. {Borodin}, A.~V. {Kostochka}, and D.~R. {Woodall}.
\newblock {List edge and list total colourings of multigraphs.}
\newblock {\em {J. Combin. Theory Ser. B}}, 40(2):184--204, 1997.

\bibitem{BDGS}
H.~{Bruhn}, R.~{Diestel}, A.~{Georgakopoulos}, and P.~{Spr\"ussel}.
\newblock {Every rayless graph has an unfriendly partition.}
\newblock {\em {Combinatorica}}, 30(5):521--532, 2010.

\bibitem{DeV}
M.~{DeVos}.
\newblock {Unfriendly partitions}.
\newblock {\em {Open problem garden}}, 2007.

\bibitem{Diestel}
R.~{Diestel}.
\newblock {\em {Graph Theory, 5th Edition.}}
\newblock {Springer-Verlag Berlin Heidelberg New York}, 2016.

\bibitem{Halin}
R.~{Halin}.
\newblock {The Structure of Rayless Graphs}.
\newblock {\em {Abh. Math. Sem. Univ. Hamburg}}, 68:225--253, 1998.

\bibitem{Has}
J.~{Haslegrave}.
\newblock {Countable graphs are majority 3-choosable}.
\newblock {\em {Discuss. Math. Graph Theory}}, 43(2):499--506, 2023.

\bibitem{KOSvW}
S.~{Kreuzer}, S.~{Oum}, P.~{Seymour}, D.~{van der Zyphen}, and D.~R. {Wood}.
\newblock {Majority colourings of digraphs.}
\newblock {\em {Electron. J. Combin.}}, 24:\#P2.25, 2017.

\bibitem{Lov-66}
L.~{Lov\'asz}.
\newblock {On decomposition of graphs.}
\newblock {\em {Stud. Sci. Math. Hungar.}}, 1:237--238, 1966.

\bibitem{Schmidt}
R.~{Schmidt}.
\newblock {Ein Ordungsbegriff f\"{u}r Graphen ohne unendliche Wege mit
  Anwendung auf n-fach zusammenh\"angende Graphen.}
\newblock {\em {Arch. Math.}}, 40:283--288, 1983.

\bibitem{SheMil}
S.~{Shelah} and E.~{Milner}.
\newblock {Graphs with no unfriendly partitions.}
\newblock {\em { in A. Baker, B. Bollob\'as, A. Hajnal, (eds.), A tribute to
  Paul Erdős, Cambridge University Press}}, page 373–384, 1990.

\end{thebibliography}

\end{document}